\newtheorem{Thm}{Theorem}[section]
\newtheorem{Lem}[Thm]{Lemma}
\newtheorem{Prop}[Thm]{Proposition}
\newtheorem*{xrem}{Remark}
\newtheorem*{xque}{Question}
\newtheorem*{xexa}{Example}
\newcommand{\R}{\mathbb{R}}
\newcommand{\CC}{\mathbb{C}}
\newcommand{\B}{\mathbb{B}}
\newcommand{\Z}{\mathbb{Z}}
\newcommand{\N}{\mathbb{N}}
\newcommand{\F}{\mathcal{F}}
\newcommand{\G}{\mathcal{G}}
\newcommand{\HH}{\mathcal{H}}
\newcommand{\C}{\mathcal{C}}
\newcommand{\E}{\mathcal{E}}
\bmdefine{\boldr}{r}
\bmdefine{\boldv}{v}
\begin{document}

\baselineskip=17pt

\title
{Foliations on the open $3$-ball by complete surfaces}

\author{Takashi Inaba}
\address{
Department of Mathematics and Informatics\\
Graduate School of Science\\
Chiba University\\
Chiba 263-8522\\
Japan
}
\email{inaba@math.s.chiba-u.ac.jp}

\author{Kazuo Masuda}
\email{math21@maple.ocn.ne.jp}

\date{}

\begin{abstract}
When is a manifold a leaf of a complete closed foliation on the open unit ball? 
We give some answers to this question.
\end{abstract}

\subjclass[2020]{Primary 57R30; Secondary 53C12}

\keywords{foliation, complete leaf, uni-leaf foliation}

\maketitle

\section{Introduction and statement of results}
This paper is concerned with the topology of leaves of foliations.
The concept of a foliation appeared in 1940's
as a geometric approach 
to solutions of differential equations,
and it is now widespread among various areas such as
complex analysis, exterior differential systems and
contact topology ({\it e.g.} \cite{BI,I}).
Recall that
a codimension $q$ $C^r$ {\it foliation} $\F$ 
on an $n$-dimensional smooth manifold $M$ is a decomposition 
$\{L_\lambda\}_{\lambda\in\Lambda}$ of $M$
into a disjoint union of
injectively immersed connected $(n-q)$-dimensional submanifolds $L_\lambda$
satisfying the following local triviality:
each point of $M$ has a neighborhood $U$ such that
$\F$ restricted to $U$ is $C^r$ diffeomorphic to the family 
$\{\R^{n-q}\times\{y\}\}_{y\in\R^q}$ of parallel 
$(n-q)$-dimensional planes in $\R^n$.
$L_\lambda$ is called a {\it leaf} of $\F$.
Note that, by collecting all the vectors tangent to leaves,
the foliation can alternatively be defined as an 
integrable subbundle
of $TM$.

In 1975, Sondow \cite{S} posed a basic question: when is a manifold a leaf?
This question is natural
(because it is regarded as a generalization of the classical embedding problem
in differential topology) and
important (because it may be related to the study of 
the topology of integral manifolds of differential equations).
Thus, since then the question has been investigated
extensively in various possible settings
(see {\it e.g.} \cite{CC, G, HB, HP, INTT, MS}).

The purpose of this paper is to consider the question 
in an interesting new setting. 
Let $\F$ be a foliation on a Riemannian manifold $(M, g)$.
A leaf $L$ of $\F$ is called {\it closed} if $L$ is a closed subset of $M$
(this is equivalent to say that $L$ is {\it properly embedded}\ ),
and {\it complete}  if $L$ is complete with respect to the induced Riemannian
metric $g|L$.
A foliation $\F$ is said to be {\it closed} (resp.  {\it complete})
if all leaves of  $\F$ are closed (resp.  complete).
Now, our setting is as follows: 
We fix, as the manifold which supports foliations,  
the open unit ball $\B^n$ of the Euclidean space $\R^n$
with the induced Euclidean metric --- the simplest incomplete open manifold.
And, foliations we try to construct on $\B^n$ should be complete and closed.
Newness of this setting is to
treat {\it complete} closed foliations
on {\it incomplete} open manifolds.
As one can imagine, 
in order to construct such foliations,
one must \lq\lq turbulize"
all the leaves
along the (ideal) boundary of $\B^n$. 

The motivation of this work comes from recent deep works of Alarc\'{o}n, 
Globevnik and Forstneri\^{c} \cite{A, AF, AG}.
They consider holomorphic foliations on the open ball of $\CC^n$. 
Our work is, in a sense, a real smooth $(C^\infty)$ version of theirs.
Since holomorphic objects are very \lq rigid',
constructions of complete holomorphic foliations are
much harder than those of real ones.
The advantage of our approach is that, by forgetting holomorphic rigidity,
one can concentrate on overcoming purely topological difficulties.
In fact, on some topic we have thus succeeded in constructing
infinitely many examples of foliations that are new in the literature 
(see Theorem 1.2 below).

Now, the first result of this paper is the following, 
whose holomorphic version has been obtained
by Alarc\'{o}n and Globevnik \cite{A, AG}. 
(Note that our result is independent of theirs,
because of the difference of codimension. 
The codimension of our foliation is $1$ 
(the most cramped codimension, see \S 6), 
while the {\it real} codimension of their foliations are at least
$2$.)
\begin{Thm}\label{main}
For any connected open orientable smooth surface $\Sigma$,
there is a codimension $1$ complete closed smooth 
foliation on $\B^3$ with a leaf diffeomorphic to $\Sigma$.
\end{Thm}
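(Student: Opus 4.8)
The plan is to construct, for an arbitrary connected open orientable surface $\Sigma$, a codimension $1$ foliation $\F$ on $\B^3$ realizing $\Sigma$ as a leaf, with every leaf both complete and closed (properly embedded). The overall strategy is a \emph{turbulization} argument: one starts from a simple foliation near the core of $\B^3$ whose central leaf is $\Sigma$, and then modifies it so that all leaves spiral toward the ideal boundary $\partial\B^3=S^2$, never reaching it, thereby becoming complete and proper in the incomplete manifold $\B^3$. First I would recall that any connected open orientable surface is classified by its genus (possibly infinite) and its space of ends, and that it embeds in $\R^3$; more usefully, $\Sigma$ is exhausted by an increasing sequence of compact subsurfaces-with-boundary $\Sigma_1\subset\Sigma_2\subset\cdots$. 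This exhaustion will be the backbone of the construction, matching a concentric exhaustion of $\B^3$ by closed balls $\ov{\B^3(r_k)}$ of radii $r_k\nearrow 1$.

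Next I would build the foliation piece by piece on the successive shells $\ov{\B^3(r_{k+1})}\setminus\B^3(r_k)$. On the innermost ball I place a trivial product chart in which $\Sigma_1\times(-\epsilon,\epsilon)$ foliates a neighborhood of the central copy of $\Sigma_1$. The key local device is a \emph{spiral / turbulization gadget}: whenever the designated leaf $\Sigma_k$ must be enlarged to $\Sigma_{k+1}$ (adding handles, creating new ends, or adjusting topology), I attach the new pieces inside the next shell while simultaneously bending the nearby leaves so that they wrap around and accumulate only on the outer boundary sphere. Concretely, near the boundary of $\B^3$ one models the foliation on a Reeb-type or logarithmically spiraling family: leaves approach $S^2$ asymptotically, so their induced path-length to the boundary diverges, forcing completeness, while each leaf stays a closed subset of the \emph{open} ball because it exits every compact set. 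The central leaf is arranged to carry exactly the topology of $\Sigma=\bigcup_k\Sigma_k$.

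The technical heart is to make these local modifications compatible across shells so that they patch into a single globally defined $C^\infty$ integrable plane field (equivalently, a $C^\infty$ codimension-one foliation), and to guarantee simultaneously the three properties: (i) the distinguished leaf is diffeomorphic to $\Sigma$; (ii) \emph{every} leaf is complete for the induced Euclidean metric; (iii) every leaf is closed in $\B^3$. Property (iii) is delivered by arranging that each leaf meets every concentric shell in a compact piece and escapes to $\partial\B^3$, so it is proper. Property (ii) follows because the metric degenerates (lengths blow up) as one spirals toward $S^2$, so no leaf has finite-length escaping paths; this is where the incompleteness of the ambient ball is turned to our advantage. Realizing an arbitrary, possibly infinite-type $\Sigma$ as the central leaf, with infinitely many handles and a complicated end structure, is what distinguishes this from a one-step turbulization.

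The step I expect to be the main obstacle is the inductive gluing that must add arbitrary topology to the central leaf while \emph{keeping all other leaves under control}. Introducing a handle or a new end on the distinguished leaf inside a shell perturbs the foliation in a whole neighborhood, and one must ensure these modifications (a) do not destroy the transverse product structure needed for the next inductive step, (b) do not accidentally create a leaf that limits onto an interior leaf (which would violate closedness in $\B^3$), and (c) still leave room for the spiraling-to-the-boundary behavior that supplies completeness. Managing this requires a careful normal-form for the turbulization near each added handle and a smoothing/interpolation lemma to merge consecutive shells. I would therefore isolate a single-handle (and single-end) local model as a lemma, verify that it can be inserted transparently into a product region, and then run the exhaustion induction, invoking a standard $C^\infty$ partition-of-unity smoothing to conclude that the limit plane field is genuinely smooth and integrable on all of $\B^3$.
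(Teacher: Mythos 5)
Your outline contains a genuine error at the point where you claim completeness comes for free. You write that leaves spiraling toward $S^2=\partial\B^3$ are complete ``because the metric degenerates (lengths blow up) as one spirals toward $S^2$.'' The metric in the theorem is the \emph{induced Euclidean metric}, which does not degenerate near the boundary sphere at all --- if it did, $\B^3$ itself would be complete and there would be nothing to prove. In dimension $3$ a leaf end can accumulate on $S^2$ like an onion skin (e.g.\ an end modelled on $x\mapsto\rho(t)x$ with $\rho(t)\nearrow1$) and still contain divergent paths of finite Euclidean length, namely the radial ones. So ``spiraling to the boundary'' does not by itself force completeness; one must engineer the leaves so that \emph{every} divergent path in a leaf is forced to travel a definite distance infinitely often. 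The paper does this with a labyrinth: it places barriers $\Gamma_k=S_k-U_\varepsilon(p_k)$ (spheres of radius $s_k\to1$ minus a small cap alternately near the north and south poles), proves that any divergent path avoiding $\bigcup_{k\geq k_0}\Gamma_k$ has infinite length (Lemma 2.1), and then constructs a diffeomorphism $\Phi$ from a model domain $\Omega$ onto $\B^3$ threading every leaf through the gaps (Lemmas 2.2--2.4, Proposition 2.5). Some such quantitative mechanism is indispensable, and your proposal has no substitute for it.

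The second problem is that the step you yourself flag as ``the main obstacle'' --- adding infinitely many handles and ends to the distinguished leaf shell by shell while keeping every other leaf closed and of controlled topology --- is exactly the hard part, and your plan offers no mechanism to resolve it; an inductive turbulization in concentric shells gives no a priori control over what the non-distinguished leaves look like or whether they limit onto interior leaves. The paper avoids this induction entirely: it realizes $\Sigma$ as the zero level $f^{-1}(0)$ of a Morse function $f$ on $(\R^2-X)\times\R$ with $f(x,y,z)=z$ outside $|z|\leq1$ (so all leaves are automatically closed, with bounded last coordinate), and then removes the critical points of $f$ by deleting pairwise disjoint divergent arcs $c_p$ and invoking the ``pushing to infinity'' Proposition 3.1, which leaves the ambient space diffeomorphic to what it was while turning the singular level-set foliation into a genuine foliation. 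Closedness and the property (P) needed for Proposition 2.5 then come from the global product structure rather than from a delicate shell-by-shell gluing. If you want to salvage your approach you would need, at minimum, to replace the metric-degeneration claim with a labyrinth-type length estimate and to supply an explicit local model, with proof, for inserting a handle or an end into a product-foliated region without altering the topology or properness of the neighboring leaves.
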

\begin{xrem}
{\rm
In \cite{HB}, Hector and Bouma showed the same statement on $\R^3$.
In \cite{HP}, Hector and Peralta-Salas generalized it in higher dimensions.
}
\end{xrem}

\begin{xrem} 
{\rm
The corresponding result to Theorem \ref{main}
for Sondow's original question (i.e. the realization of manifolds as leaves
of foliations on {\it compact} manifolds)
was first obtained by 
Cantwell and Conlon \cite{CC}.
For recent developments in this area, see e.g. \cite{AL, MS}.
}
\end{xrem}

\begin{xrem}
{\rm
A non-orientable surface cannot be a leaf of a foliation of $\B^3$.
In fact, if it can, the foliation must be transversely non-orientable.
The existence of such a foliation contradicts the simply-connectedness of 
$\B^3$.
}
\end{xrem} 

Our next concern is a uni-leaf foliation.
Here, we call a foliation $\F$ {\it uni-leaf}
if all the leaves of $\F$ are mutually diffeomorphic.

\begin{xexa}
{\rm
A complete closed uni-leaf foliation on $\B^2=\{ (x,y)\in \R^2\mid x^2+y^2<1\}$ can easily be constructed as follows.
Begin with the standard foliation $\HH$ on $\B^2$ defined by $dy=0$.
Then, obviously all leaves of $\HH$ are diffeomorphic to the real line
and closed in $\B^2$.
Let $h$ be a diffeomorphism of $\B^2$
defined by 
$\displaystyle h(r,\theta)=\left(r,\theta+\tan\frac{\pi r^2}2\right)$,
where $(r,\theta)$ are the polar coordinates.
Then, $h$ sends any leaf $\ell$ of $\HH$ 
to a complete curve $h(\ell)$ in $\B^2$,
because each end of $h(\ell)$ spirals asymptotically on $\partial\B^2$. 
Hence, $h(\HH)$ is a foliation we have desired.
Note that, since $h$ is real analytic ($C^\omega$), so is $h(\HH)$.
}
\end{xexa}

So, let us consider uni-leaf foliations on $\B^3$.
For a connected open orientable surface $\Sigma$,
let $\E$ be the set of ends of $\Sigma$
with the usual topology
and $\E^*$ the closed subset of $\E$ consisting of nonplanar ends. 
It is known (\cite{R}) that the pair $(\E, \E^*)$ 
and the genus
determine
the homeomorphism type of $\Sigma$.
It is also known that
two smooth surfaces are diffeomorphic if and only if they are homeomorphic.

Now, we will introduce a new concept.
We assume that the genus $g$ of $\Sigma$ is either $0$ or $\infty$.
Let $e$ be a point of $\E$. 
Let $Z$ be 
empty if $g=0$ 
and a countably infinite
subset of $\E-\E^*-\{e\}$ if $g=\infty$.
Suppose further that every point of $Z$ is an isolated point
of $\E$ 
and the derived set of $Z$ in $\E$ is $\E^*$.
In this situation we say that the $4$-tuple
$(\E,\E^*, Z,e)$ satisfies the {\it self-similarity property}
if the following condition holds:
there exist
two copies $(\E^+,\E^{+*}, Z^+, e^+)$, $(\E^-,\E^{-*}, Z^-, e^-)$ 
of $(\E,\E^*,Z,e)$
 and 
a homeomorphism 
$h: \E^+\vee_{e^+=e^-} \E^-\to \E$
such that 
$h(e^\pm)=e$
and
$h(Z^+\sqcup Z^-)=Z$
(hence, $h(\E^{+*}\vee_{e^+=e^-} \E^{-*})=\E^*$), 
where $\vee$ is the wedge sum.

\begin{xrem}
{\rm
This concept is not the same as the usual self-similarity in fractal geometry. 
Ours is a kind of {\it pointed} self-similarity, 
meaning that we fix a basepoint once and for all and then only consider
subspaces containing the basepoint and mappings preserving the basepoint.
}
\end{xrem}

\begin{xexa}
{\rm
$(1)$ Let $C$ be a Cantor set embedded in $S^2$.
Then, all ends of the surface $\Sigma_C=S^2-C$ are planar,
and the endset $\E$ of $\Sigma_C$ is identified with $C$
and hence has the self-similarity property.
In fact, $\E$ can be expressed as $\E=\E^+\vee_{e}\E^-$,
where $\E^+$ and $\E^-$ are subsets of $\E$ both homeomorphic to $C$
such that $\E^+\cap\E^-=\{e\}$ for some $e\in\E$. In this case, $Z$ is empty.

$(2)$
Let $J$ be the orientable open surface with one end and infinite genus
$($so called Jacob's ladder$)$.
We take in $J$ a discrete infinite subset $S$ 
and put $\Sigma_J=J-S$.
Then, the endset $\E$ of $\Sigma_J$ consists of isolated planar ends
$e_n$ $($$n\in\N$$)$ each of which corresponds with a point of $S$
and one nonplanar end $e$ to which $e_n$'s converge.
Thus, $\E=\{e\}\cup\{e_n\}_{n\in\N}$ and $\E^*=\{e\}$.
See Fig.{\rm 1}.
Set $Z=\{e_n\}_{n\in\N}$.
Then, the $4$-tuple $(\E,\E^*,Z,e)$ satisfies the self-similarity property.
Indeed, it suffices to define:
$\E^+=\{e\}\cup\{e_{2n-1}\}_{n\in\N}$, 
$\E^-=\{e\}\cup\{e_{2n}\}_{n\in\N}$,
$\E^{+*}=\E^{-*}=\{e\}$,
$Z^\pm=\E^\pm-\{e\}$,
$e^+=e^-=e$,
and $h$ is the identitiy.
}
\end{xexa}
\vspace{3cm}

\begin{figure}[htbp]
\centering
\includegraphics[height=8cm]{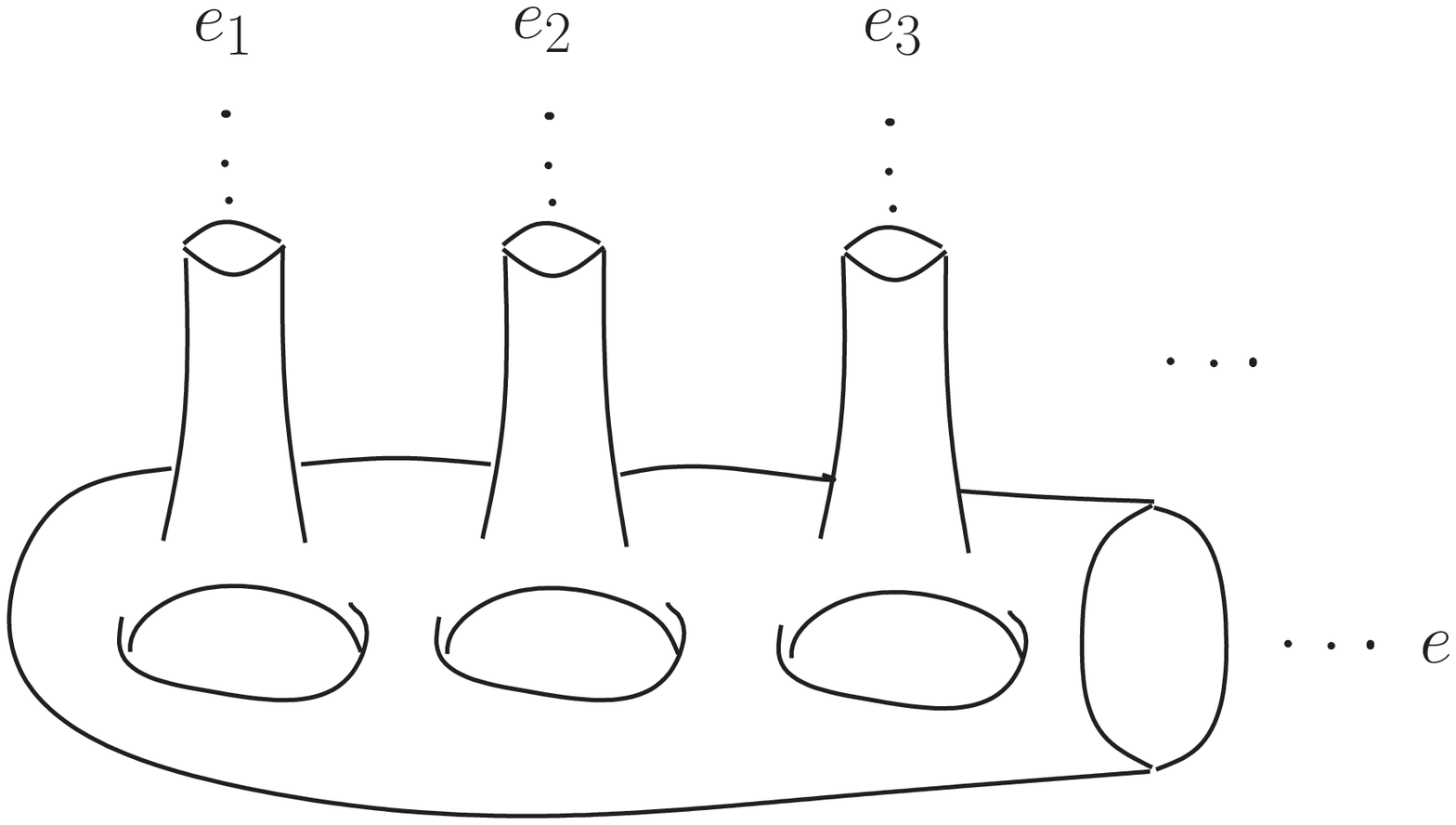}
      \caption{}
  \end{figure}

We remark that there are many other surfaces whose endsets have the self-similarity property.
We will give them at the end of \S 4.

The next theorem shows that infinitely many surfaces can be realized 
as leaves
of uni-leaf foliations on $\B^3$.

\begin{Thm}\label{uni}
Let $\Sigma$ be a connected open orientable smooth surface
with genus $g$ either $0$ or $\infty$,
and let $(\E,\E^*)$ be its endset pair.
Suppose that there exist a point $e$ of $\E$ and 
a subset $Z$ of $\E-\E^*-\{e\}$
such that 
\begin{itemize}
\item[(1)] $Z$ is empty if $g=0$ and countably infinite if $g=\infty$,
\item[(2)] every point of $Z$ is an isolated point
of $\E$, 
\item[(3)] the derived set of $Z$ in $\E$ is $\E^*$, and
\item[(4)] 
$(\E,\E^*, Z,e)$ satisfies the self-similarity property.
\end{itemize}
Then, there exists a codimension $1$ complete closed smooth uni-leaf
foliation of $\B^3$ having $\Sigma$ as a leaf.
\end{Thm}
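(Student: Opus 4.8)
The plan is to assemble $\F$ from copies of a single foliated building block, glued along a recursively branched tree that accumulates onto $\partial\B^3$, so that hypothesis (4) forces every leaf to carry the end structure of $\Sigma$. First I would translate the self-similarity property into a cutting-and-gluing statement for $\Sigma$ itself. Since $\Sigma$ is determined up to diffeomorphism by $(\E,\E^*)$ and its genus (\cite{R}), the homeomorphism $h:\E^+\vee_{e^+=e^-}\E^-\to\E$ means that $\Sigma$ can be recovered, up to diffeomorphism, by splitting it along a properly embedded line $\gamma$ both of whose ends run out to $e$: this produces two open half-surfaces $\Sigma^+,\Sigma^-$, each again diffeomorphic to $\Sigma$, with the end $e$ shared by both (the wedge point). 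The relations $h(\E^{+*}\vee\E^{-*})=\E^*$ and $h(Z^+\sqcup Z^-)=Z$ record that the nonplanar ends and the isolated planar ends $Z$ are distributed consistently between the two halves, while condition (3) says that, when $g=\infty$, the handles making the ends of $\E^*$ nonplanar accumulate exactly along the $Z$-ends.

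Second, I would build one foliated block $W$ realizing a single such split. I model the basic block on $P\times[0,1]$ with $P$ a pair of pants, foliated by the plaques $P\times\{t\}$, so that gluing its two outgoing boundary annuli to the incoming annuli of two child blocks reproduces one step of $\Sigma=\Sigma^+\cup_\gamma\Sigma^-$, the gluing annuli tracking the line $\gamma$ and hence the shared end $e$. I would use two further auxiliary block types: one that caps a branch so as to create an isolated planar end (a $Z$-end), and, in the $g=\infty$ case, one that inserts a handle into the passing plaque. In a collar of each boundary annulus I would make the foliation a Riemannian product, so that blocks glue together in a $C^\infty$ fashion, and I would shape each outgoing tube so that, exactly as for the spiralling curves in the example on $\B^2$, it winds asymptotically toward $\partial\B^3$.

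Third, I would lay the blocks out along the rooted tree furnished by iterating the self-similar decomposition of $\E$, rescaling so that the blocks at depth $\to\infty$ shrink and accumulate onto $\partial\B^3$ and exhaust $\B^3$. The handle-carrying and capping blocks are placed at sub-trees chosen so that the induced partition of the tree-ends realizes $h(Z^+\sqcup Z^-)=Z$ and forces the handles to accumulate to $\E^*$ (condition (3)), all self-similarly. Gluing the block foliations then yields a global codimension-$1$ $C^\infty$ foliation $\F$ of $\B^3$. Completeness holds because each end of each leaf spirals into $\partial\B^3$ with infinite length, just as in the $\B^2$ example, and closedness (properness) holds because every leaf is confined to a connected sub-tree of blocks, which can accumulate only on $\partial\B^3$ and never in the interior of $\B^3$.

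Finally, I would verify the uni-leaf property. Tracing any leaf $L$ through the tree, its end space is reassembled as the iterated wedge of copies of $\E$ along the corresponding sub-tree; by the self-similarity property each such wedge is homeomorphic to $\E$, with $\E^*$ and $Z$ matched and the accumulated genus equal to $0$ or $\infty$ as required, so that $L\cong\Sigma$ for \emph{every} leaf. I expect this last step to be \emph{the main obstacle}. In a foliation one must control the whole continuum of leaves at once, and it is delicate to rule out exceptional leaves — say a leaf threading the gluing annuli, or a limit leaf lying against $\partial\B^3$ — that might acquire the wrong end space or the wrong genus. The self-similarity hypothesis is precisely what lets the construction look the same at every scale and along every leaf, but carrying out the end-space bookkeeping (isolation of the $Z$-ends, derived set equal to $\E^*$, correct placement of genus) uniformly over all leaves, while keeping the gluings smooth and each leaf complete and properly embedded, is the heart of the argument.
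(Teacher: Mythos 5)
There is a genuine gap, and it lies exactly where you flag ``the main obstacle'': the verification that every leaf is diffeomorphic to $\Sigma$ is not a technical loose end of your construction but a point where the construction, as described, actually fails. In a block $P\times[0,1]$ foliated by the plaques $P\times\{t\}$, each plaque continues into a plaque of \emph{both} children, so every leaf of the glued foliation traverses the entire infinite binary tree obtained by iterating the self-similar splitting. Each infinite branch of that tree contributes a distinct end of the leaf, so every leaf acquires at least a Cantor set of ends. The self-similarity hypothesis only gives $\E\cong\E^+\vee_e\E^-$ for \emph{one} splitting; it does not say that the end space of the infinitely iterated wedge (which contains the Cantor set of tree-ends) is again $\E$. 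For $\Sigma_J$ (Example (2) of \S 1, where $\E$ is countable) this is already a contradiction, so the tree-of-blocks architecture cannot realize the theorem in the stated generality. Pruning the tree with your ``capping'' blocks does not rescue the argument: the leaf's end space is then governed by the pruned tree, and proving that this equals $\E$ is precisely the problem you set out to solve. A secondary, unaddressed issue is completeness: ``spiralling into $\partial\B^3$'' is not by itself enough once leaf ends shrink to single boundary points; some quantitative device is needed to force divergent paths to have infinite length.

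The paper avoids the iteration problem entirely by using the self-similarity only \emph{once per leaf}. It realizes $\E-\{e\}$ as a subset $X$ of $\R^2$, removes from $\R^3$ the half-infinite cylinders $(X^+-Z^+)\times[-1,\infty)$ and $(X^--Z^-)\times(-\infty,1]$, and takes the level sets of a single Morse function $f$ (equal to $z$ outside small surgery regions indexed by $Z$) as leaves: for $|z|\leqq1$ the leaf is $\Sigma$ itself, and for $|z|>1$ it is the ``half'' with end data $(\E^\pm,\E^{\pm*},Z^\pm,e)$ plus finitely many extra punctures, which the self-similarity together with Lemma 4.3 identifies with $(\E,\E^*,Z,e)$, hence with $\Sigma$. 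Critical points are removed by pushing curves to infinity (Propositions 3.1 and 3.2), the domain is shown diffeomorphic to $\R^3$, and completeness comes not from spiralling but from the labyrinth of spheres-with-holes in Proposition 2.5 after a diffeomorphism confining each leaf to a slab $\operatorname{pr}_3^{-1}([-k,k])$. If you want to salvage a gluing approach, you would need a transverse structure in which a leaf sees only boundedly many stages of the decomposition, which is in effect what the level-set construction provides.
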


\begin{xrem}
{\rm
In the holomorphic situation,
the existence of a uni-leaf foliation on the $2$-dimensional
holomorphic ball is known in the case 
where the leaf $\Sigma$ is 
a disk $\{z\in\CC\mid |z|<1\}$ (see \cite{AF}).
It seems that the problem remains open
whether other Rieman surfaces
can be leaves of some holomorphic uni-leaf foliations.
}
\end{xrem} 

The following two arguments are crucial to proving our theorems:

(1) to build a kind of barrier in $\B^n$
(called a {\it labyrinth} in \cite{AF})
in order to force all leaves to become complete.
The existence of a holomorphic labyrinth is a profound result.
On the other hand, we find that a real labyrinth is quite easy to be built (see \S 2).

(2) to show that the self-similarty property of the endset of a surface
is
a sufficient condition for the surface to be a leaf of 
some uni-leaf foliation.
This assertion is proved
by a careful construction of a rather complicated 
submersive function on some domain of $\R^3$ (see \S 4).

We close this section with two more remarks.

\begin{xrem}
{\rm
All foliations in this paper are $C^\infty$.
For foliations in Theorems 1.1 and 1.2 the authors have an idea of raising the diffentiability to $C^\omega$
by using $C^\omega$ approximation of $C^\infty$ Morse functions and diffeomorphisms.
But, at present, they have not written up the proof in full precision yet.
}
\end{xrem} 

\begin{xrem}
{\rm
All foliations in this paper are closed, hence
their holonomy pseudogroups are always trivial.
}
\end{xrem}  

\section{Constructing complete foliations on the ball}
The content of this section is a real smooth version of the argument 
developed in \cite{AF}.

Let $\{r_k\}$ and $\{s_k\}$ be sequences of real numbers
satisfying $0<s_1<r_1<s_2<r_2<\cdots\longrightarrow1$,
and let $B_k$ (resp. $S_k$) be the closed ball (resp. the sphere) in $\R^n$ 
centered at the origin with radius $r_k$ (resp. $s_k$). 
Put $\Gamma_k=S_k-U_\varepsilon(p_k)$,
where $0<\varepsilon\ll s_1$, $p_k=(0,\cdots, 0,(-1)^ks_k)$
and $U_\varepsilon(p_k)$ is the open 
$\varepsilon$-neighborhood of $p_k$ in $\R^n$.
A path $\gamma:[0,\infty)\to M$ in an open manifold $M$ is called {\it divergent} 
if $\gamma(t)$ leaves any compact set of $M$ as $t\to \infty$.
Then, the following is evident.

\begin{Lem}
Every divergent smooth path in $\B^n$ avoiding 
$\bigcup_{k\geqq k_0}\Gamma_k$ $($for some $k_0$$)$ has infinite length.
\end{Lem}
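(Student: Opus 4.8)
The plan is to exploit that a divergent path escapes to the boundary sphere and hence must puncture every sphere $S_k$ for large $k$, while avoiding $\Gamma_k$ forces each puncture to occur inside the tiny cap $U_\varepsilon(p_k)$; since these caps alternate between the two poles of the last coordinate axis, passing from one cap to the next costs a fixed positive amount of length, and there are infinitely many of them. To begin, since the compact subsets of $\B^n$ are precisely the closed sets contained in some concentric ball of radius $<1$, divergence of $\gamma$ means exactly that $|\gamma(t)|\to1$ as $t\to\infty$. Choosing $N\geq k_0$ large enough that $s_N>|\gamma(0)|$, for every $k\geq N$ the continuous function $t\mapsto|\gamma(t)|$ starts below $s_k$ and eventually exceeds $s_k$, so by the intermediate value theorem $\gamma$ meets the sphere $S_k$.

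Next I would extract a strictly increasing sequence of crossing times. Put $\tau_k=\sup\{t:|\gamma(t)|=s_k\}$, the last instant at which $\gamma$ sits on $S_k$; this supremum is finite because $|\gamma(t)|$ eventually stays above $s_k$, and for $t>\tau_k$ one has $|\gamma(t)|>s_k$ (otherwise continuity would produce a later crossing of the value $s_k$). From $s_k<s_{k+1}$ together with this monotonicity I would deduce $\tau_N<\tau_{N+1}<\cdots$, so that the arcs $\gamma|_{[\tau_k,\tau_{k+1}]}$ are pairwise non-overlapping. Moreover $\gamma(\tau_k)\in S_k$, and since $\gamma$ avoids $\Gamma_k=S_k-U_\varepsilon(p_k)$ this forces $\gamma(\tau_k)\in U_\varepsilon(p_k)$, i.e. $|\gamma(\tau_k)-p_k|<\varepsilon$.

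The final step is to bound the length from below, where the essential geometric input is the alternation $p_k=(0,\dots,0,(-1)^k s_k)$: consecutive base points lie at opposite ends of the axis, so $|p_{k+1}-p_k|=s_k+s_{k+1}\geq 2s_1$. Consequently the length of $\gamma$ on $[\tau_k,\tau_{k+1}]$ is at least the chord length
\[
|\gamma(\tau_{k+1})-\gamma(\tau_k)|\;\geq\;|p_{k+1}-p_k|-2\varepsilon\;\geq\;2s_1-2\varepsilon\;>\;0 ,
\]
a positive constant independent of $k$, because $\varepsilon\ll s_1$. Summing these estimates over all $k\geq N$ yields infinite total length.

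The step I expect to require the most care is the bookkeeping of the crossing times, namely verifying that the $\tau_k$ are genuinely strictly increasing, so that the chords being summed come from disjoint arcs of $\gamma$ and no length is double counted; the rest is elementary. It is worth emphasizing that the alternation of the poles is not a mere cosmetic choice: if all the holes $U_\varepsilon(p_k)$ were placed near a single pole, then one would have $|p_{k+1}-p_k|=s_{k+1}-s_k\to0$ and the argument would collapse, since a path could slip out to the boundary along a single meridian with finite length. The alternation is exactly what forces each crossing to zigzag across the ball, incurring a uniformly bounded cost.
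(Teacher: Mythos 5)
Your proof is correct and supplies exactly the argument the authors had in mind: the paper offers no proof at all (it declares the lemma ``evident''), and the intended reasoning is precisely that the path must exit each sphere $S_k$ through the cap $U_\varepsilon(p_k)$, with the alternating poles forcing a uniform lower bound $2s_1-2\varepsilon>0$ on the length of each successive leg. Your bookkeeping of the last-crossing times $\tau_k$, including the verification that they are strictly increasing so the chord estimates come from non-overlapping arcs, is sound, and your closing remark about why the alternation of the poles is essential is exactly the right observation.
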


Put $P_k=\R^{n-1}\times[-k,k]$.
Let $\Omega$ be an open set of $\R^n$ diffeomorphic to
$\B^n$ such that its image projected to the last coordinate
of $\R^n$ is unbounded.
Then, we can choose an exhaustive sequence $\{C_k\}_{k\in\N} $ of 
subsets of $\Omega$
satisfying the following properties:
(i) $C_k$ is diffeomorphic to the closed ball, 
(ii) $C_k\subset\operatorname{Int} C_{k+1}$,
(iii) $C_k\subset P_{k+1}$ and
(iv) $C_k-P_k\ne\emptyset$.

\begin{Lem}
There exists a diffeomorphism $\Phi$ from $\Omega$ to $\B^n$
such that for all $k\in\N$
\begin{itemize}
\item[($1_k$)] $\Phi(C_k)=B_k$ and 
\item[($2_k$)] $\Phi(P_k\cap C_k)\cap \Gamma_k=\emptyset$.
\end{itemize}
\end{Lem}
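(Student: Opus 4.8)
The plan is to construct $\Phi$ by induction over the shells, defining it on $A_{k+1}\eqdef C_{k+1}\setminus\operatorname{Int}C_k$ at the $(k{+}1)$-st stage, and then to glue the pieces using the exhaustions $\bigcup_k C_k=\Omega$ and $\bigcup_k B_k=\B^n$ (the latter because $r_k\to 1$). By (i) and (ii) both $A_{k+1}$ and the target shell $T_{k+1}\eqdef B_{k+1}\setminus\operatorname{Int}B_k$ are diffeomorphic to $S^{n-1}\times[0,1]$. The observation that decouples the stages is that $s_{k+1}\in(r_k,r_{k+1})$, so $\Gamma_{k+1}$ lies on the sphere $S_{k+1}$ sitting strictly inside $T_{k+1}$ and disjoint from every $\partial B_j$. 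Consequently the map $\Phi|_{\partial C_k}$ already fixed at the previous stage (sending $\partial C_k$ to the sphere of radius $r_k$, which misses $\Gamma_{k+1}$) creates no conflict with $(2_{k+1})$; moreover $\Phi(P_{k+1}\cap C_k)\subset\Phi(C_k)=B_k\subset\{|x|\le r_k\}$ is automatically disjoint from $\Gamma_{k+1}$. Thus $(2_{k+1})$ reduces to the single-shell condition $\Phi(P_{k+1}\cap A_{k+1})\cap\Gamma_{k+1}=\emptyset$.

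On the shell I would work in polar form. I choose a submersion $u\colon A_{k+1}\to[r_k,r_{k+1}]$ equal to $r_k$ on $\partial C_k$ and to $r_{k+1}$ on $\partial C_{k+1}$, with every level set $\{u=c\}$ an embedded $(n-1)$-sphere, together with a smooth family of diffeomorphisms $\psi_c\colon\{u=c\}\to S^{n-1}(c)$ satisfying $\psi_{r_k}=\Phi|_{\partial C_k}$; then $\Phi(x)=\psi_{u(x)}(x)$ is a diffeomorphism of $A_{k+1}$ onto $T_{k+1}$ extending the prescribed inner-boundary data. Since $\Gamma_{k+1}$ lies entirely on the single sphere $S_{k+1}$, condition $(2_{k+1})$ constrains only the level $c=s_{k+1}$: writing $s=s_{k+1}$, it asks that $\psi_{s}$ carry the inside-slab part $\{u=s\}\cap P_{k+1}$ into the open cap $S_{k+1}\cap U_\varepsilon(p_{k+1})$ (indeed onto a slightly smaller concentric cap, so that its image stays strictly off the closed set $\Gamma_{k+1}$), equivalently that the complementary poke-out part $\{u=s\}\setminus P_{k+1}$ be spread over all of $\Gamma_{k+1}$. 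Because a diffeomorphism of $S^{n-1}$ may send one disk onto a disk of any size, this is possible as soon as $\{u=s\}$ meets the poke-out region and is cut by the wall $\{|x_n|=k+1\}$ into exactly two disks.

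The heart of the matter is therefore to arrange the sweep so that the distinguished sphere $\{u=s\}$ already protrudes out of the slab. By (iii) the inner boundary $\partial C_k\subset P_{k+1}$ lies inside the slab, while by (iv) the outer boundary $\partial C_{k+1}$ reaches $\{|x_n|>k+1\}$ on at least one side. I would shape the spheres $\{u=c\}$ so that, as $c$ increases from $r_k$ to $r_{k+1}$, a single tongue grows in that poke-out direction and has crossed the wall $\{|x_n|=k+1\}$ in one $(n-2)$-sphere by the time $c=s$. Then $\{u=s\}$ splits into a poke-out disk lying in $A_{k+1}\setminus P_{k+1}$ and a complementary inside-slab disk, and I let $\psi_s$ send the latter into the cap and the former onto $\Gamma_{k+1}$. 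Only one poke-out side is needed, furnished by (iv), and a second one (if present) is simply ignored by keeping the tongue on one side.

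The base case $k=1$ is identical, with $C_1$ a ball and $u\colon C_1\to[0,r_1]$, routing the level sphere $\{u=s_1\}$ through the poke-out region $C_1\setminus P_1$ guaranteed nonempty by (iv). Finally, taking all the sweeps and families $\psi_c$ to agree to infinite order along the shared spheres $\partial C_k$ (using collars) makes the glued map smooth, and it is a diffeomorphism $\Omega\to\B^n$ by the two exhaustions. The step I expect to be the main obstacle is precisely the construction of $u$ whose level $\{u=s\}$ protrudes as a single disk into the poke-out region while remaining a submersion with spherical level sets; this amounts to producing an embedded separating $(n-1)$-sphere in $A_{k+1}$ with a prescribed tongue past $\{|x_n|=k+1\}$, after which everything else is routine interpolation.
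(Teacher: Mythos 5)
Your proof is correct and takes essentially the same route as the paper's: an induction over the exhaustion $\{C_k\}$, extending $\Phi$ shell by shell onto $B_{k+1}-\operatorname{Int}B_k$ and using condition (iv) to route the preimage of $\Gamma_{k+1}$ through the poke-out region $C_{k+1}-P_{k+1}$, which is equivalent to the paper's requirement $\Phi(C_{k+1}-P_{k+1})\supset\Gamma_{k+1}$. The paper disposes of the extension step in one sentence, while you spell out the sweep of level spheres and the sphere diffeomorphism at the distinguished level $s_{k+1}$; the underlying idea is identical.
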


\begin{proof}
We will enlarge the domain of definition inductively.
First, define $\Phi$ on $C_1$ so that it satisfies ($1_1$) and ($2_1$).
This is possible because, by (iv) above, 
$C_1-P_1$ is non-empty and
$\Gamma_1\subset\operatorname{Int} B_1$.
Next, suppose $\Phi$ has already been defined on $C_k$ so as to
satisfy ($1_\ell$) and ($2_\ell$) for $\ell\leqq k$ .
Since $\Gamma_{k+1}\subset B_{k+1}-B_k$
and, by (iv) above, $C_{k+1}-P_{k+1}$ is non-empty,
it is possible to extend the definition of $\Phi$ on $C_{k+1}$
so that $\Phi(C_{k+1}-P_{k+1})\supset\Gamma_{k+1}$
and that $\Phi(C_{k+1}-C_k)=B_{k+1}-B_k$.
Then, we see that the resulting $\Phi$ satisfies ($1_{k+1}$) and ($2_{k+1}$).
Since $\{C_k\}_{k\in\N} $ is an exhausting sequence in $\Omega$, 
this inductive procedure gives a diffeomorphism
from $\Omega$ to $\B^n$, as desired.
\end{proof}

\begin{Lem}
Let $\Phi$ be as in Lemma {\rm 2.2.} 
Then, for all $k\in\N$, 
$\Phi(P_k\cap \Omega)\cap \Gamma_k=\emptyset$.
\end{Lem}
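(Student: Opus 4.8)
The plan is to reduce the assertion to condition $(2_k)$ of Lemma 2.2 by splitting $P_k\cap\Omega$ according to membership in $C_k$. Writing
\[
P_k\cap\Omega=(P_k\cap C_k)\cup\bigl(P_k\cap(\Omega\setminus C_k)\bigr),
\]
I would treat the two pieces separately. The first is handled verbatim by $(2_k)$, which already gives $\Phi(P_k\cap C_k)\cap\Gamma_k=\emptyset$, so no new work is needed there.

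For the second piece the key point I would use is that $\Phi$ is a bijection of $\Omega$ onto $\B^n$ which, by $(1_k)$, carries $C_k$ onto $B_k$ and hence carries the complement $\Omega\setminus C_k$ exactly onto $\B^n\setminus B_k$. Consequently every point of $P_k\cap(\Omega\setminus C_k)$ is sent to a point of $\B^n$ of norm greater than $r_k$. On the other hand $\Gamma_k$ lies on the sphere $S_k$ of radius $s_k$, and the standing hypothesis $s_k<r_k$ places $\Gamma_k$ inside $\operatorname{Int}B_k$. Therefore $\Phi\bigl(P_k\cap(\Omega\setminus C_k)\bigr)$ is disjoint from $\Gamma_k$.

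Combining the two pieces yields $\Phi(P_k\cap\Omega)\cap\Gamma_k=\emptyset$, which is the claim. I expect no genuine obstacle here: the whole argument rests on the elementary facts that $s_k<r_k$ forces $\Gamma_k\subset\operatorname{Int}B_k$ and that a bijection sends complements to complements. Conceptually the lemma merely promotes the \emph{local} statement $(2_k)$, concerning $P_k\cap C_k$, to the \emph{global} one about all of $P_k\cap\Omega$; the point is that the only region of $\B^n$ that could possibly meet $\Gamma_k$ already sits inside $B_k=\Phi(C_k)$.
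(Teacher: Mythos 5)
Your argument is correct and is essentially the paper's own proof: both rest on the observation that $\Gamma_k\subset\operatorname{Int}B_k=\operatorname{Int}\Phi(C_k)$ (since $s_k<r_k$), so only points of $C_k$ can be mapped by $\Phi$ into $\Gamma_k$, and then condition $(2_k)$ finishes the job. Your explicit decomposition of $P_k\cap\Omega$ into the parts inside and outside $C_k$ is just a slightly more verbose phrasing of the same reduction.
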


\begin{proof}
Since $\Gamma_k\subset B_k=\Phi(C_k)$,
a point $p$ of $\Omega$ satisfies $\Phi(p)\in\Gamma_k$
only if $p\in C_k$.
Hence, by Lemma 2.2 ($2_k$), 
$\Phi(P_k\cap\Omega)\cap\Gamma_k
=\Phi(P_k\cap C_k)\cap\Gamma_k
=\emptyset
$.
\end{proof}

Let $\G$ be a closed foliation on $\Omega$
(i.e. every leaf of $\G$ is a closed subset of $\Omega$).
Then, the direct image $\F=\Phi(\G)$
is a closed foliation on $\B^n$.
Here, we consider the following property (P) for $\G$:

(P) For any leaf $L$ of $\G$ there exists $k\in \N$ such that 
$L\subset P_k$.

\vspace{2mm}

We recall that a leaf $F$ of $\F$ is complete if and only if
every divergent smooth path in $F$ has infinite length.
The next lemma gives a sufficient condition for the completeness
of the leaves of $\F$.

\begin{Lem}
If $\G$ satisfies the property $({\rm P})$,
then all leaves of $\F$ are complete.
\end{Lem}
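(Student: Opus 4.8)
The plan is to use the completeness criterion recalled just before the lemma: a leaf $F$ of $\F$ is complete precisely when every divergent smooth path in $F$ has infinite length. By Lemma 2.1 a divergent path is automatically of infinite length as soon as it avoids some tail $\bigcup_{k\geq k_0}\Gamma_k$ of the barriers, so the entire task reduces to showing that every leaf of $\F$ eventually misses the $\Gamma_k$.

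To this end I would fix an arbitrary leaf $F$ of $\F$ and write $F=\Phi(L)$ for the corresponding leaf $L$ of $\G$. Property (P) provides a $k_0\in\N$ with $L\subset P_{k_0}$, and since the slabs are nested ($P_{k_0}\subset P_k$ for $k\geq k_0$) we get $L\subset P_k\cap\Omega$ for all such $k$. Lemma 2.3 then yields $F=\Phi(L)\subset\Phi(P_k\cap\Omega)$ with $\Phi(P_k\cap\Omega)\cap\Gamma_k=\emptyset$, so $F$ avoids $\bigcup_{k\geq k_0}\Gamma_k$. Consequently any divergent smooth path $\sigma\colon[0,\infty)\to F$ also avoids this tail, and Lemma 2.1 forces $\sigma$ to have infinite length, using here that the length of $\sigma$ measured in the induced metric $g|F$ equals its Euclidean length in $\B^n$ because its velocity is tangent to $F$. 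As $F$ and $\sigma$ are arbitrary, all leaves of $\F$ are complete.

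The one point that needs care, and the place where the closedness hypothesis is essential, is the implicit claim that $\sigma$ is divergent in $\B^n$ and not merely in $F$. Because $\F$ is a closed foliation, $F$ is a closed subset of $\B^n$, so $F\cap K$ is compact for every compact $K\subset\B^n$; a path leaving every compact subset of $F$ therefore leaves every compact subset of $\B^n$, which is exactly the hypothesis Lemma 2.1 requires. Without closedness a divergent path in the leaf could accumulate at an interior point of $\B^n$ and the argument would break down. The remainder is a routine combination of property (P), the nesting of the slabs $P_k$, and Lemma 2.3.
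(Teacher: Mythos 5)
Your argument is correct and follows essentially the same route as the paper: property (P) confines $L$ to a slab, Lemma 2.3 shows $\Phi(L)$ misses the tail $\bigcup_{k\geq k_0}\Gamma_k$, and Lemma 2.1 then gives infinite length for divergent paths. Your additional observation that closedness of the leaf is what turns divergence in $F$ into divergence in $\B^n$ is a worthwhile point of care that the paper leaves implicit, but it does not change the approach.
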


\begin{proof}
Suppose $\G$ satisfies (P) and let $F$ be any leaf of $\F$.
Put $L=\Phi^{-1}(F)$.
Since $L$ is a leaf of $\G$,
by (P) there exists $k_L\in \N$ such that 
$L\subset P_{k_L}$. 
Then, noticing Lemma 2.3, 
for any $k\geqq k_L$ we have
$F\cap \Gamma_k=
\Phi(L)\cap \Gamma_k
\subset\Phi(P_{k_L}\cap\Omega)\cap \Gamma_k
\subset\Phi(P_k\cap\Omega)\cap \Gamma_k
=\emptyset$.
Therefore, $F$ does not intersect $\bigcup_{k\geqq k_L}\Gamma_k$,
hence, in particular,
neither does any smooth path on $F$.
This with Lemma 2.1 implies the completeness of the leaves of $\F$.
\end{proof}

We summarize the result obtained in this section as follows.

\begin{Prop} 
Let $\G$ be a closed foliation 
on an open subset $\Omega$ of $\R^n$ diffeomorphic to $\B^n$
such that
\begin{itemize} 
\item[(1)] $\operatorname{pr}_n(\Omega)$ is unbounded, 
where $\operatorname{pr}_n:\R^n\to\R$ is the projection to the $n$-th coordinate, and
\item[(2)] each leaf $L$ of $\G$ verifies the property 
$({\rm P}):$ $\operatorname{pr}_n(L)$ is bounded. 
\end{itemize}
Then, there exists a diffeomorphism $\Phi$ from $\Omega$ to $\B^n$ 
such that $\Phi(\G)$ is a complete closed foliation on $\B^n$.
\end{Prop}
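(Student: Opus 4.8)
The plan is to recognize this proposition as the synthesis of Lemmas 2.1--2.4, so that the real work is to check that its two hypotheses are exactly what those lemmas consume. First I would note that hypothesis (2) is merely a restatement of property (P): to say $\operatorname{pr}_n(L)$ is bounded is to say $\operatorname{pr}_n(L)\subset[-k,k]$ for some $k\in\N$, i.e. $L\subset P_k$, and conversely. Thus under hypothesis (2) every leaf of $\G$ satisfies (P), which is precisely the hypothesis of Lemma 2.4.

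Next I would use hypothesis (1) to produce the exhaustion $\{C_k\}_{k\in\N}$ required in the paragraph preceding Lemma 2.2. Because $\operatorname{pr}_n(\Omega)$ is unbounded, for each $k$ there is a point of $\Omega$ whose $n$-th coordinate has absolute value greater than $k$; feeding such points into a standard compact exhaustion of $\Omega\cong\B^n$ yields closed-ball domains $C_k$ with $C_k\subset\operatorname{Int}C_{k+1}$, $C_k\subset P_{k+1}$ and $C_k-P_k\neq\emptyset$, i.e. properties (i)--(iv). With $\{C_k\}$ fixed, Lemma 2.2 supplies a diffeomorphism $\Phi:\Omega\to\B^n$ satisfying $(1_k)$ and $(2_k)$ for all $k$. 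Since $\Phi$ is a diffeomorphism and $\G$ is closed, the direct image $\F=\Phi(\G)$ is a closed foliation of $\B^n$, closedness of leaves being a diffeomorphism invariant. Finally, as every leaf of $\G$ satisfies (P), Lemma 2.4 gives that every leaf of $\F$ is complete, so $\Phi(\G)$ is the complete closed foliation sought.

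At the level of this proposition there is essentially no obstacle, since the analytic core has already been discharged in the lemmas: that a divergent path avoiding $\bigcup_{k\ge k_0}\Gamma_k$ has infinite length (Lemma 2.1), and that $\Phi$ can be arranged to keep each slab $P_k$ clear of $\Gamma_k$ (Lemmas 2.2--2.3). The one point demanding a line or two of genuine care is the construction of the exhaustion $\{C_k\}$ from hypothesis (1); this is exactly where unboundedness of $\operatorname{pr}_n(\Omega)$ is indispensable, for without it property (iv) --- and hence the entire turbulization scheme forcing completeness --- would be unavailable.
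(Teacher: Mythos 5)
Your proposal is correct and follows essentially the same route as the paper: Proposition 2.5 is stated there as a summary of \S 2, with the proof being exactly the assembly of the exhaustion $\{C_k\}$ from hypothesis (1), Lemmas 2.2--2.3 for $\Phi$, and Lemma 2.4 (via Lemma 2.1) for completeness, with closedness preserved under the diffeomorphism. Your added remark on where unboundedness of $\operatorname{pr}_n(\Omega)$ enters (property (iv) of the exhaustion) matches the paper's own setup preceding Lemma 2.2.
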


\section{Realizing open surfaces as leaves}
First, we prepare an elementary lemma:

\begin{Prop}\label{push}
Let $W$ be a smooth open manifold and $\C$ be a countable family 
of injective smooth paths $c_k:[0,\infty)\to W$ $(k\in\N)$
such that
\begin{itemize}
\item[(1)] $c_k$ is divergent for each $k$.
\item[(2)] they are pairwise disjoint and the family
$\C$ is locally finite.
\end{itemize}
Then, $W-\bigcup_k c_k([0,\infty))$ is diffeomorphic to $W$.
\end{Prop}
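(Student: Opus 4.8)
The plan is to produce the diffeomorphism by \emph{sweeping the rays out to infinity}. Put $R_k=c_k([0,\infty))$. Divergence makes each $c_k$ a proper map, so every $R_k$ is a closed subset of $W$ homeomorphic to $[0,\infty)$; I would treat the $c_k$ as proper embeddings (smooth rays with interior initial point $c_k(0)$), perturbing slightly if necessary. By hypothesis $A:=\bigcup_k R_k$ is a closed, locally finite union of pairwise disjoint rays, so $W\setminus A$ is an open submanifold, and the goal is to build a diffeomorphism $\Phi\colon W\to W\setminus A$.

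First I would dispose of a single ray $R=c([0,\infty))$. Fix a tubular neighbourhood $N$ of $R$, i.e.\ a diffeomorphism of $\R^{\,n-1}\times\R$ onto an open set $N\subset W$ sending $\{0\}\times[0,\infty)$ to $R$. The guiding fact is that a proper ray can be absorbed into an end of $W$: one constructs a complete vector field $X$ on $W$, equal to $\dot c$ along $R$, whose forward flow $\varphi_t$ drives the tip $c(0)$ monotonically along $R$ out to infinity. The associated isotopy ``opens up'' $R$, and as $t\to\infty$ one obtains, after reparametrisation, a limiting smooth map $\Phi_R$ which is a bijective local diffeomorphism, hence a diffeomorphism, of $W$ onto $W\setminus R$. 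It is worth stressing that $\Phi_R$ cannot be taken to be the identity outside a thin tube about $R$: removing a ray necessarily rearranges the whole end into which $R$ runs, and this is exactly what the flow accomplishes. I would carry out the verification on the model $(\R^{\,n},\{0\}\times[0,\infty))$, checking directly that the limit is onto the complement of the ray.

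To treat the countable family I would iterate, removing $R_1,R_2,\dots$ in turn and forming the infinite composition $\Phi=\cdots\circ\Phi_{R_2}\circ\Phi_{R_1}$, where $\Phi_{R_k}$ sweeps $R_k$ out of the manifold $W\setminus(R_1\cup\cdots\cup R_{k-1})$ produced so far. Local finiteness is what makes this legitimate: near any point of $W$ only finitely many rays, and hence only finitely many of the sweeps, are active, so the composition is locally finite and defines a smooth map with smooth inverse. Equivalently, one may run all the sweeps at once through a single ambient isotopy that pushes every tip $c_k(0)$ to infinity along its ray, local finiteness again guaranteeing that the isotopy is smooth at each finite time.

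The hard part will be the behaviour at infinity. For one ray I must show that the limit $\Phi_R$ is \emph{proper} and surjective onto $W\setminus R$, so that no spurious end is created; and in the iteration I must ensure that each successive complement still sees every remaining $R_k$ as a ray running cleanly out to an end, so that every sweep is again a diffeomorphism and the composition converges. This is precisely where the hypotheses do their work: divergence forces the rays to recede to the ends of $W$, while local finiteness keeps them from crowding together, so that their removal neither disconnects an end nor pinches off a new one. Making this end-control rigorous — arranging the model near each tip and along each ray so that the opened region is exactly $R_k$, and then controlling the infinite limit — is the crux of the argument; the passage from a single ray to the locally finite family is, by comparison, bookkeeping.
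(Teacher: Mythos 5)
Your plan is in the right family of ideas (push the rays off to infinity along a flow), but the step you yourself flag as ``the crux'' --- producing, for a single ray $R$, a diffeomorphism of $W$ onto $W\setminus R$ --- is exactly where the proposal has a genuine gap, and it is compounded by a false structural claim. You assert that $\Phi_R$ \emph{cannot} be taken to be the identity outside a thin tube about $R$. It can, and the paper's proof rests on precisely this. Take as the model the half-infinite solid cylinder $N=D^{n-1}\times[-1,\infty)$ with the ray sitting as $\{0\}\times[0,\infty)$ (note the extra collar $t\in[-1,0)$ behind the tip), choose a bounded nonnegative $\lambda$ on $N$ with $\lambda=1$ near $\partial N$ and $\lambda^{-1}(0)$ \emph{exactly} the ray, and flow along $\lambda\,\partial/\partial t$. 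The single map $g(p,t)=\varphi\bigl((p,-1),\,t+1\bigr)$ --- flow time depending on the point, no limit as $t\to\infty$ --- is already a diffeomorphism of $N$ onto $N-(\{0\}\times[0,\infty))$ equal to the identity near $\partial N$: the flow line through $(0,-1)$ converges to, but by uniqueness of solutions never reaches, the tip $(0,0)$, while every other vertical line is carried onto itself. Conjugating $g$ into pairwise disjoint, locally finite tubular neighbourhoods $N_k$ of the rays (with slices shrinking as $t\to\infty$) and extending by the identity finishes the proof. Nothing outside the tubes moves, so there is no infinite composition to control, no properness-of-a-limit issue, and no ``end control'' problem.

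By contrast, your mechanism takes $X=\dot c$ along $R$ --- a field \emph{nonvanishing} on the ray --- and hopes that the time-$t$ flows $\varphi_t$, suitably reparametrised, converge as $t\to\infty$ to a diffeomorphism onto $W\setminus R$. For finite $t$ each $\varphi_t$ is onto all of $W$, and limits of flows of this kind generically fail to exist as smooth maps (points just off $R$ are also swept arbitrarily far, so injectivity, properness and surjectivity of any limit are all in doubt); you defer precisely these verifications. The fix is the one above: make the field vanish exactly on the ray and evaluate the flow at a point-dependent time, so that no limiting process is needed. Once the single-ray lemma is local and supported in a tube, the passage to the countable family really is just the bookkeeping you describe, with local finiteness used only to guarantee that the glued map is smooth.
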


The proof consists in \lq\lq pushing to infinity" each point $c_k(0)$
along the path $c_k$. We give it here for the reader's convenience.

\begin{proof}
 Let $\dim W=n$ and give $W$ an arbitrary Riemannian metric.
 Put $N=D^{n-1}\times[-1,\infty)$ (where $D^{n-1}$ is 
the closed unit disk in $\R^{n-1}$
centered at the origin). 
 Take a nonnegative bounded smooth function
$\lambda:N\to\R$ suc that $\lambda=1$ near $\partial N$
and that for $(p,t)\in N$, 
$\lambda(p,t)=0$ if and only if $p=0$ and $t\in [0,\infty)$.
Define a smooth vector field $V$ on $N$ by 
$V=\lambda\dfrac{\partial}{\partial t}$
and let $\varphi:N\times[0,\infty)\to N$ be the (local) flow
generated by $V$.
Then, the map
$g:N\to N-(\{0\}\times[0,\infty))$
defined as
$g(p,t)=\varphi((p,-1),t+1)$
is a diffeomorphism which is the identity near $\partial N$.
Now,
for each $k$, take a neighborhood $N_k$ of $c_k([0,\infty))$
and a diffeomorphism $u_k:N\to N_k$ so that
(1) $N_k$'s are pairwise disjoint and the family is locally finite,
(2) $u_k(0,t)=c_k(t)$ for $t\in [0,\infty)$, and
(3) the diameter of $u_k(D^{n-1}\times\{t\})$ tends to $0$ as $t\to\infty$.
We then obtain a desired diffeomorphism 
$h:W\to W-\bigcup_k c_k([0,\infty))$ by setting:
$h=u_k\circ g\circ{u_k}^{-1}$ on $N_k$ for any $k$, and is the identity everywhere else.
\end{proof}

Using almost the same argument, we can also show the following 

\begin{Prop} 
Let $M$ be a smooth manifold,
$P$ a subset of $M$,
and $N$ a neighborhood of $P$ in $M$.
Put $W=M\times\R$. 
Let $\ell_p=\{p\} \times(-\infty,a_p]$ $($$p\in P, a_p\in\R$$)$
be a family of vertial lines in $W$.
If the union $\bigcup_{p\in P}\ell_p$ is closed in $W$, 
then $W-\bigcup_{p\in P}\ell_p$ is diffeomorphic to $W$ 
by a fiber $(\{\{ m \} \times \R\}_{m\in M})$-preserving diffeomorphism
which is the identity outside $N\times\R$. 
\end{Prop}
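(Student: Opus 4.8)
The plan is to run the same ``push to infinity'' idea as in Proposition~\ref{push}, but globally and fiberwise: I will push each upper endpoint $(p,a_p)$ of the ray $\ell_p$ down to $t=-\infty$ along its own fiber $\{p\}\times\R$, thereby deleting $A\eqdef\bigcup_{p\in P}\ell_p$ while keeping every fiber $\{m\}\times\R$ invariant. By hypothesis $A$ is closed in $W$. Since the deleted part over $p\in P$ is $(-\infty,a_p]$, the fiberwise model map I want is a diffeomorphism of $\R$ onto $(a_p,\infty)$ over $p\in P$ and the identity of $\R$ over $m\notin P$, depending smoothly on the base point and equal to the identity outside $N\times\R$.

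First I would construct the driving data. Using that $A$ is closed, choose a smooth function $\rho\colon W\to[0,1]$ with $\rho^{-1}(0)=A$ and $\rho\equiv1$ outside $N\times\R$ (take a smooth nonnegative function vanishing exactly on $A$, normalize it below $1$, and glue it to the constant $1$ by a cutoff equal to $1$ near $A$ and $0$ off $N\times\R$). Let $V=\rho\,\dfrac{\partial}{\partial t}$ and let $\varphi\colon W\times\R\to W$ be its flow; since the $t$--speed is bounded by $1$ and $V$ is vertical, $\varphi$ is complete and preserves every fiber. Next I produce a smooth ``ceiling''. Setting $\alpha(m)=a_m$ for $m\in P$ and $\alpha(m)=-\infty$ otherwise, the closedness of $A=\{(m,t):t\le\alpha(m)\}$ says exactly that $\alpha$ is upper semicontinuous, so a standard partition-of-unity argument yields a smooth $b\colon M\to\R$ with $b(m)>\alpha(m)$ for every $m$, i.e. $(m,b(m))\in W-A$ for all $m$.

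I then define the fiber-preserving map
\[
h(m,t)=\varphi\big((m,b(m)),\,t-b(m)\big).
\]
It is smooth and fiber-preserving because $\varphi$ and $b$ are and $V$ is vertical. Outside $N\times\R$ one has $\rho\equiv1$, so $\varphi((m,s),\sigma)=(m,s+\sigma)$ and $h(m,t)=(m,t)$; thus $h$ is the identity there. Over $m\notin P$ the function $\rho(m,\cdot)$ is strictly positive, so the fiber is a single orbit; as $\rho\le1$ gives $1/\rho\ge1$, this orbit runs from $t=-\infty$ to $t=+\infty$ as the flow time sweeps $\R$, and $h(m,\cdot)$ is an increasing diffeomorphism of $\R$ onto $\R$. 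Over $p\in P$ the base point $(p,b(p))$ lies in $(a_p,\infty)$ and $\rho(p,\cdot)$ vanishes precisely on $(-\infty,a_p]$; flowing up reaches $+\infty$ only at infinite time (again $\rho\le1$), while flowing down merely approaches the barrier $a_p$, since the Lipschitz bound $\rho(p,t)\le C(t-a_p)$ near $a_p$ makes $\int_{a_p}dt/\rho$ diverge. Hence $h(p,\cdot)$ is an increasing diffeomorphism of $\R$ onto $(a_p,\infty)$. Thus $h$ is a fiberwise bijection $W\to W-\bigcup_{p}\ell_p$, and in the coordinates $(m,t)$ its Jacobian is block triangular with positive lower-right entry $\partial_t h=\rho(h(m,t))>0$, so $h$ is everywhere a local diffeomorphism and therefore the desired fiber-preserving diffeomorphism, the identity outside $N\times\R$.

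The crux, and the only place going beyond Proposition~\ref{push}, is the globalization. There the family is locally finite and one patches disjoint local models; here $P$ may be non-discrete and the heights $\{a_p\}$ may be unbounded, so no such patching and no global horizontal section are available. The single global flow of $V$ replaces the local models, and the smooth ceiling $b$ dominating the merely upper-semicontinuous $\alpha$ is exactly what encodes the closedness hypothesis and furnishes a smoothly varying base point over all of $M$. The quantitative point to verify with care is the normalization $\rho\le1$, which uniformly forces $1/\rho\ge1$ so that orbits reach $\pm\infty$ only in infinite time and asymptote to each barrier $a_p$ rather than crossing it.
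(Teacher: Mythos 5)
Your argument is correct and is essentially the paper's own proof: the authors likewise take a nonnegative bounded smooth function $\mu$ vanishing exactly on $\bigcup_p\ell_p$ and equal to $1$ outside $N\times\R$, flow along $\mu\,\partial/\partial z$, and set $h(m,t)=\psi((m,\lambda(m)),t-\lambda(m))$ for a smooth $\lambda$ with $\lambda(p)>a_p$. Your additional justifications (upper semicontinuity giving the smooth ceiling $b$, and the divergence of $\int dt/\rho$ at the barrier) are details the paper leaves implicit, not a different method.
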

\begin{proof}
It suffices to push $\bigcup_p\ell_p$ to $-\infty$ 
with respect to the $\R$-factor.
To be precise, take a nonnegative bounded smooth function
$\mu:W\to\R$ which vanishes exactly on $\bigcup_p\ell_p$
and is constantly $1$ outside $N\times\R$.
We consider the flow $\psi:W\times\R\to W$ on $W$ generated by
the vector field $\mu\dfrac{\partial}{\partial z}$,
where $z$ is the coordinate of $\R$.
Take a smooth function $\lambda:M\to \R$ such that
$\lambda(p)>a_p$ for $p\in P$.
Then, the map
$h:W\to W-\bigcup_p\ell_p$
defined by
$h(m,t)=\psi((m,\lambda(m)), t-\lambda(m))$
is a desired diffeomorphism.
\end{proof}
Note that in this proposition $(i)$ the local finiteness
need not be assumed
and $(ii)$ the pushing-to-infinity operation can be carried out
for an arbitrary small neighborhood $N$ of $P$.
\vspace{5mm}

\noindent
{\it Proof of Theorem {\rm \ref{main}}.} 
Any connected open orientable surface $\Sigma$ can be 
constructed as follows:
First, remove from $\R^2$ a closed totally disconnected set $X$. 
($X\cup\{\infty\}$ will be the endset $\E$ of $\Sigma$,
where $\infty$ is the point of infinity of the one-point compactification of $\R^2$.)
Next, take 
an at most countable
set $Z$ in $\R^2-X$ in such a way that
for any compact set $K$ in $\R^2-X$ the intersection $Z\cap K$ is finite.
(The set of accumulation points of $Z$ in $\R^2\cup\{\infty\}$ will be $\E^*$.)
Then, for each point $q$ of $Z$, choose a small compact neighborhood
$U_q$ of $q$ in $\R^2-X$ so that they are pairwise disjoint, 
and in each $U_q$ perform a surgery to make a genus.
The resulting surface is $\Sigma$.
Observe that the whole of the above construction can be carried out
in $(\R^2-X)\times\R$.
To do so, for each $q\in Z$ 
choose a small compact neighborhood $V_q$ of $(q,0)$
in $(\R^2-X)\times\R$,
and perform ambient surgeries on $(\R^2-X)\times\{0\}$ 
inside each $V_q$. 
Thus, we obtain $\Sigma$ 
as a properly embedded submanifold of $(\R^2-X)\times\R$.
Note that $\Sigma$ separates $(\R^2-X)\times\R$ into
two connected components.

We then take a Morse function $f:(\R^2-X)\times\R\to\R$ 
so that 
\begin{itemize}
\item[(1)] $f(x,y,z)=z$ for 
$(x,y,z)\in(\R^2-X)\times[(-\infty,-1]\cup[1,\infty)]$, and that
\item[(2)] $0$ is a regular value of $f$ with $f^{-1}(0)=\Sigma$.
\end{itemize}
The existence of such $f$ follows from the above construction of $\Sigma$.
We let $\operatorname{Crit} (f)$ denote the set of critical points of $f$,
(which is a countably infinite set if $\Sigma$ has nonplanar ends).
Now, we take an increasing sequence
$\emptyset=K_0\subset K_1\subset K_2\subset\cdots$
 of codimension $0$ compact submanifolds in $\R^2-X$ 
such that $\bigcup_{i=1}^\infty K_i=\R^2-X$.
For each $p\in\operatorname{Crit} (f)$, we can
construct an injective smooth path
$c_p:[0,\infty)\to(\R^2-X)\times\R$ as follows.
Suppose $p\in (K_i-K_{i-1})\times\R$. Then,
\begin{itemize}
\item[(1)] $c_p(0)=p$,
\item[(2)] $c_p$ intersects neither $\Sigma$ nor $(\R^2-X)\times\{\pm1\}$,
\item[(3)] $c_p$ does not intersect $K_{i-1}\times\R$,
\item[(4)] for each $j\geqq i$, $c_p$ intersects
$\partial K_j\times\R$ transversely at exactly one point,
\item[(5)] $c_p(t)$ converges to a point in 
$(X\cup\{\infty\})\times\{\pm1/2\}$ as $t\to\infty$, and
\item[(6)] if $p\ne q$, then
$c_p([0,\infty))$ and $c_q([0,\infty))$ are disjoint.
\end{itemize}
We denote by $M$ the space obtained from $(\R^2-X)\times\R$
by removing $\bigcup_{p\in\operatorname{Crit}(f)}c_p([0,\infty))$.
Then, by applying Proposition \ref{push} for $W=(\R^2-X)\times\R$ and $\C=\{c_p\}$
we see that $M$ is diffeomorphic to $(\R^2-X)\times\R$.

Next, define an open subset $\Omega$ of $\R^3$
to be the union of $M$ and $\R^2\times(2,\infty)$.
Note that, by the above argument, 
$\Omega$ is diffeomorphic to $[(\R^2-X)\times\R]\cup[\R^2\times(2,\infty)]$.
Thus, by Proposition 3.2 
we can push $X\times(-\infty,2]$ to $-\infty$
with respect to the second coordinate
and obtain that $\Omega$ is diffeomorphic to $\B^3$.

Next, we extend the domain of our Morse function $f$ to $\Omega$
by
defining $f$ to be the projection to the second factor on $\R^2\times(2,\infty)$.
We let $\G$ denote the foliation on $\Omega$ 
whose leaves are connected components of the level sets of $f$.
Then, $\G$ has no singularities because all the critical points of $f$ are removed
from $\Omega$.
It is also obvious that all leaves of $\G$ are closed in $\Omega$. 
By the construction, we see that $\G$ satisfies 
the conditions (1) and (2) of Proposition 2.5.
Therefore,  
we conclude that $\G$ is diffeomorphic to a complete closed foliation 
on $\B^3$ containing $\Sigma$ as a leaf.
Theorem \ref{main} is proved.
\hfill$\square$

\section{uni-leaf foliations}
In this section
we consider the question: 
which manifold is a leaf of a complete closed uni-leaf foliation 
on the open unit ball?
This question has first been asked by Alarc\'{o}n and Forstneri\^{c} \cite{AF} in the holomorphic category.
They have shown that for any integer $n>1$,
there exists a complete closed holomorphic uni-leaf
foliation of the open unit ball in $\CC^n$
with disks as leaves.
We work in the real smooth category and prove Theorem \ref{uni}.

In order to clarify the flow of the proof of our theorem,
we first treat two simple cases:
$\Sigma_C$ and $\Sigma_J$ given in \S 1.
We will realize each of them as a leaf of a complete closed smooth uni-leaf
foliation of $\B^3$.
(Then, the full proof of Theorem \ref{uni}
will be understood as an elaboration of these cases.)

\begin{xexa}
{\rm
Let $C$, $\Sigma_C$, $\E$, $\E^\pm$ and $e$ be as in Example $(1)$ 
in \S {\rm 1}.
Through the identification of $S^2-\{e\}$ with $\R^2$,
we regard $\Sigma_C$, $\E-\{e\}$ and 
$\E^\pm-\{e\}$ as subsets of $\R^2$.
Now, put
$$\Omega=\R^3
-(\E^+-\{e\})\times[-1,\infty)
-(\E^--\{e\})\times(-\infty,1].
$$
Then, by Proposition {\rm 3.2},
$\Omega$ is diffeomorphic to $\R^3$.
We denote by $\G$ the foliation on $\Omega$ obtained by
restricting the foliation $\{\operatorname{pr}_3^{-1}(z)\}_{z\in\R}$ on $\R^3$.
Then, all leaves of $\G$ are diffeomorphic to 
$\Sigma_C$.
In fact, it is obvious when $|z|\leqq1$.
In the case when $z>1$ $($resp. $z<-1$$)$, we have $\operatorname{pr}_3^{-1}(z)\cap\Omega$
is diffeomorphic to $\R^2-(\E^+-\{e\})$
$($resp. $\R^2-(\E^--\{e\})$$)$. 
But, since we are assuming that $\E^\pm$ are homeomorphic to $\E$, 
we have that $\operatorname{pr}_3^{-1}(z)\cap\Omega$ is diffeomorphic to $\Sigma_C$ also in this case.
Finally, since $\G$ satisfies 
the conditions $(1)$ and $(2)$ of Proposition {\rm 2.5},
$\G$ is diffeomorphic to a complete closed uni-leaf 
foliation on $\B^3$, as desired.
}
\end{xexa}
 
\begin{xexa}
{\rm
We first embed Jacob's ladder $J$ in $\R^3$ as follows.
Let $H=\R^2\times\{0\}$
and $0<\varepsilon\ll1$. 
For each $n\in\Z-\{0,\pm1\}$, choose a small neighborhood $U_n$
of $(n,0)$ in $\R^2$.
We put
$W_n^+=U_n\times(-1-\varepsilon,n+\varepsilon)$ $(n\ge2)$ 
and
$W_n^-=U_{n}\times(n-\varepsilon,1+\varepsilon)$ 
$(n\le-2)$.
Inside each $W_n^\pm$ we perform an ambient surgery 
on $H$ to make a genus. 
Thus, we obtain a new surface embedded in $\R^3$
and diffeomorphic to $J$.
Hereafter, we identify this surface with $J$.
Next, we put
$\ell^+_n=\{(n,0)\}\times[-1,\infty)$ $(n\ge2)$ and
$\ell^-_n=\{(n,0)\}\times(-\infty,1]$ $(n\le-2)$.
We can take a Morse function  
$f:\R^3\to \R$ satisfying the following conditions
$($by isotoping $J$ suitably in $\bigcup W_n^+\cup\bigcup W_n^-$ if necessary$)$:
\begin{itemize}
\item[(1)] 
$0$ is a regular value of $f$ and $f^{-1}(0)=J$, 
\item[(2)] $f(x,y,z)=z$
outside the union of $W_n^+$'s
and $W_n^-$'s.
\item[(3)] 
The critical points of $f$ are $A_n^+=(n,0,-1)$, $B_n^+=(n,0,n)$
$(n\ge2)$ and
$A_n^-=(n,0,n)$, $B_n^-=(n,0,1)$ $(n\le-2)$,
whose critical values are their $z$-coordinates.
If we pass through $A_n^+$ or $A_n^-$
$($resp. $B_n^+$ or $B_n^-)$,
in the direction of increasing values of $f$,
then the level set of $f$ is modified so that a genus is created
$($resp. erased$)$.
\item[(4)] Inside each $W_n^\pm$, 
$f$ is a standard Morse function admitting a cancelling
pair of critical points.
\item[(5)] on each $\ell_n^\pm$, $f$ is strictly increasing 
with respect to $z$.
\end{itemize}
Let $\HH$ denote the foliation $($with singularity$)$ on $\R^3$
with the level sets of $f$ as leaves.
Then, every regular leaf $f^{-1}(z)$ of $\HH$
is diffeomorphic to $J$.
In fact, if $|z|<1$, it is obvious. 
If $|z|>1$, $f^{-1}(z)$ loses \lq\lq half" the number of infinite genus
in comparison with $f^{-1}(0)$.
But it still possesses infinite genus, hence, is diffeomorphic to $J$.
We can also observe that any singular leaf of $\HH$ has infinite genus.
Now, denote by $\Omega$ the set obtained from $\R^3$ by removing
the infinite family of half lines $\ell^\pm_n$.
Then, it follows from 
Proposition {\rm \ref{push}}
that
$\Omega$ is diffeomorphic to $\R^3$.
Since all the critical points are removed,
$\HH$ restricted to $\Omega$ becomes a nonsingular foliation,
say $\G$.
We will check the topology of leaves of $\G$.
First, we recall that the diffeomorphism type of $\Sigma_J$
is characterized by  being orientable and having infinite genus, one non-planar end $($say $e$$)$ and countably infinite isolated planar ends 
converging to $e$. 
Note that 
each leaf $L=f^{-1}(z)\cap\Omega$ of $\G$ is obtained 
from the leaf $H=f^{-1}(z)$ of $\HH$ by removing 
countably infinite discrete points $H\cap\ell^\pm_n$.
Therefore, if $H$ is a regular leaf $($hence diffeomorphic to $J$$)$, then 
$L$ is diffeomorphic to $\Sigma_J$.
In the case when $H$ is a singular leaf, we have to notice that
by the removal of one singular point from $H$, two punctures $($i.e. planar ends$)$ are produced on $L$.
But, anyway, planar ends of $L$ are countably infinite and anyone of them 
is isolated. Therefore, $L$ is  diffeomorphic to $\Sigma_J$ also in this case.
Consequently, all the leaves of $\G$ are diffeomorphic to $\Sigma_J$.
As a final step, we take a diffeomorphism $\Psi:\R^3\to V$,
where $V=\{(x,y,z)\in \R^3\mid (x^2+y^2)|z|<1\}$,
such that $\Psi$ leafwise preserves the one-dimensional foliation
$dx=dy=0$.
If we set $\widehat\Omega=\Psi(\Omega)$,
we can see that
the foliation $\Psi(\G)$ on $\widehat\Omega$
satisfies
the property $({\rm P})$ in \S {\rm 2},
while $\widehat\Omega$ is unbounded in the direction of $z$.
Consequently,
by Proposition {\rm 2.5}, 
we obtain a complete closed uni-leaf
foliation on $\B^3$ 
with all leaves diffeomorphic to $\Sigma_J$.
}
\end{xexa}

Now, we will proceed to 

\noindent
{\it Proof of Theorem {\rm \ref{uni}}.} 
Let $\Sigma$ be a connected open orientable smooth surface 
and $(\E,\E^*)$ the endset pair of $\Sigma$.
We use the notation in \S 1.
We assume that there exist $e$ and $Z$ as in Theorem \ref{uni}
such that
$(\E,\E^*, Z,e)$ satisfies the self-similarity property.
(Here, we should recall that if $\Sigma$ has no genus, 
then $\E^*$ and $Z$ are empty.)
Put $X=\E-\{e\}$ and $X^\pm=\E^\pm-\{e^\pm\}$.
Via $h$, we regard $\E^\pm$, $X^\pm$ and $Z^\pm$
as subsets of $\E$, $X$ and $Z$ respectively.

Now, we will start the construction of the uni-leaf foliation.
We embed $\E$ into the one-point compactification
$\R^2\cup\{\infty\}$ of $\R^2$
in such a way that $e$ is mapped to $\infty$.
From now on, we identify $e$ with $\infty$
and regard $X$ and $Z$ as subsets of $\R^2$.
We consider two cases separately.

\noindent
{\bf The case where $Z$ is empty.} 
In this case, the construction is a verbatim repetition of the one
in the case of $\Sigma_C$ ($S^2$ minus a Cantor set):
Namely, put
$$\Omega=\R^3-X^+\times[-1,\infty)-X^-\times(-\infty,1]$$
and let $\G$ denote the foliation on $\Omega$ obtained by
restricting the foliation $\{\operatorname{pr}_3^{-1}(z)\}_{z\in\R}$ on $\R^3$.
Then, by Proposition 3.2, $\Omega$ is diffeomorphic to $\R^3$ and,
by the self-similarity condition, 
all leaves of $\G$ are diffeomorphic to 
$\Sigma$.
Finally, 
by Proposition 2.5, we can conclude that $\G$ is diffeomorphic to a complete closed uni-leaf foliation on $\B^3$.

\noindent
{\bf The case where $Z$ is countably infinite.} 
Since $X-Z$ is closed in $\R^2$,
similarly as in \S 3
for each point $q$ of $Z$, we can choose a small compact 
neighborhood
$V_q$ of $(q,0)$ in the open $3$-manifold $(\R^2-(X-Z))\times\R$ 
so that they are pairwise disjoint. 
Then, perform an ambient surgery on $(\R^2-(X-Z))\times\{0\}$ 
to make a genus inside each $V_q$. 
Thus, we obtain a new surface 
as a properly embedded submanifold of $(\R^2-(X-Z))\times\R$.
Let $\widehat\Sigma$ denote this surface.
We see that the endset pair of $\widehat\Sigma$ is $(\E-Z, \E^*)$.
We may assume that 
 for each $q\in Z$ the intersection of $\widehat\Sigma$ and $\{q\}\times\R$
is a single point.

By the self-similarity property,
$X$ and $Z$ are respectively expressed as the disjoint union
of two subsets as follows:
$X=X^+\sqcup X^-$ and
$Z=Z^+\sqcup Z^-$,
with the property that
$X^\pm$ and $Z^\pm$ are respectively homeomorphic 
to $X$ and $Z$.
We put 
$A^+=(X^+-Z^+)\times[-1,\infty)$,
$A^-=(X^--Z^-)\times(-\infty,1]$ and 
$$
O=\R^3-A^+-A^-.
$$
Note that $X^+-Z^+$ and $X^--Z^-$ are closed in $\R^2$,
and hence $A^+$ and $A^-$ are closed in $\R^3$.
We number the elements of $Z$ arbitrarily:
$Z^+=\{q_n\mid n=2,3,4,\cdots\}$ and 
$Z^-=\{q_n\mid n=-2,-3,-4,\cdots\}$.
We then take a Morse function $f:O\to\R$ 
so that 
\begin{itemize}
\item[(1)]  $f^{-1}(0)=\widehat\Sigma$,
\item[(2)]  $\operatorname{Crit} (f)$ consists of the following points:\\
$(q_n,-1-\frac1n)$ and $(q_n,n)$ for each $n=2,3,4\cdots$,\\
$(q_n,n)$ and $(q_n,1-\frac1n)$ for each $n=-2,-3,-4\cdots$.
\item[(3)]  For each $p\in\operatorname{Crit} (f)$, the value $f(p)$ is
 the $z$-coordinate of $p$,
\end{itemize}
 Let $W^+_q=D^+_q\times I^+_q$ ($q\in Z^+$) be 
a compact product neighborhood of the segment $\{q\}\times [-1-\frac1n,n]$ 
in $O$,
where $D^+_q$ is a closed disk in $\R^2$ 
centered at $q$ 
such that 
$D^+_q\cap X=\{q\}$ 
and $[-1-\frac1n,n]\subset I^+_q\subset\R$.
Similarly,
let $W^-_q=D^-_q\times I^-_q$ ($q\in Z^-$) be a compact product neighborhood of
$\{q\}\times [n,1-\frac1n]$ in $O$,  
where $D^-_q$ is a closed disk in $\R^2$ 
centered at $q$ 
such that 
$D^-_q\cap X=\{q\}$ 
and $[n,1-\frac1n]\subset I^-_q\subset\R$.
We choose the sets $D^+_q$ ($q\in Z^+$) and $D^-_q$ ($q\in Z^-$) 
so as to be pairwise disjoint.
\begin{itemize}
\item[(4)] Inside each $W^+_q$ ($q\in Z^+$) or $W^-_q$ ($q\in Z^-$),
$f$ is conjugate to the standard Morse function which admits a standard 
canceling pair of critical points, the one which has a smaller 
$z$-coordinate is of index $1$ and the other is of index $2$.
\item[(5)] The lines $\{q\}\times[-1-\frac1n,\infty)$ ($q\in Z^+$) and 
$\{q\}\times(-\infty,1-\frac1n]$ ($q\in Z^-$) are transverse to the level sets
of $f$ everywhere except at critical points,
\item[(6)] $f(x,y,z)=z$ outside the union of $W^+_q$'s ($q\in Z^+$) and 
$W^-_q$'s ($q\in Z^-$).
\end{itemize}
Then, the family of the level sets of $f$ defines a singular foliation
on $O$.
The singularities are the critical points of $f$.
We see that
each level set contains at most one critical point.
We can also observe that for each $z\in\R$ 
the endset pair $(\E_z, \E_z^*)$ of the level set $f^{-1}(z)$ 
is identified with:
$(\E-Z, \E^*)\times\{z\}$ 
if $|z|\leqq 1$,
$(\E^+-Z^+, \E^{+*})\times\{z\}$ if $z>1$, and
$(\E^--Z^-, \E^{-*})\times\{z\}$ if $z<-1$.
By the self-similarity property, all of these are homeomorphic to
$(\E-Z,\E^*)$.

As the next step, we define
$$C=\bigcup_{q_n\in Z^+}\{q_n\}\times\left[-1-\frac1n,\infty\right) 
\cup\bigcup_{q_n\in Z^-}\{q_n\}\times\left(-\infty,1-\frac1n\right]
$$
and
$$
\Omega=O-C.
$$
Then, by Propositions 3.1 and 3.2
we see that $\Omega$ is diffeomorphic to $\R^3$.
Each level set $L_z=f^{-1}(z)\cap\Omega$ of $f|_\Omega$
is obtained from $f^{-1}(z)$ by deleting the points of intersection 
with $C$.
Since all the critical points of $f$ are removed by this deletion,
every $L_z$
is now a non-singular smooth surface.
Let $\G$ be the foliation on $\Omega$ thus obtained.
Here, observe that if the point of intersection of $f^{-1}(z)$ and
 $\{q\}\times I$ ($q\in Z$, and $I$ is either $[-1-\frac1n,\infty)$
or $(-\infty,1-\frac1n]$) is not a critical point, then the deletion
 yields one puncture (or, one planar end) on $f^{-1}(z)$,
while if the point of intersection is a critical point,
then the deletion yields two punctures (or, two planar ends).
Now, let $Z_z$ be the set of all ends 
of $L_z$ newly produced by these deletions.
Then, the endset pair of $L_z$ is expressed as 
$(\E_z\cup Z_z, \E_z^*)$,
where $(\E_z, \E_z^*)$ is the endset pair of $f^{-1}(z)$. 
Since, as remarked above, each $f^{-1}(z)$ contains 
at most one critical point,
it follows from the property (2) of $f$
that $Z_z$ is identified with:
$Z$ if $|z|\leqq 1$,
the union of $Z^+$ and $F_z$ if $z>1$,
the union of $Z^-$ and $F_z$ if $z<-1$,
where $F_z$ is a (possibly empty) finite subset of $\R^2-X$.
(Supplementary explanation:
 If $z\geqq2$, $f^{-1}(z)$ does not intersect 
$\{q\}\times(-\infty,1-\frac1n]$ for any $q\in Z^-$. 
So, in this case, $F_z$ is either a singleton or empty
depending on whether there exists a critical point on
$f^{-1}(z)\cap\{q\}\times[-1-\frac1n,\infty)$ for some $q\in Z^+$.
If $1<z<2$, we see that $f^{-1}(z)$ intersects 
$\{q\}\times(-\infty,1-\frac1n]$ for at most finitely many $q\in Z^-$.) 
Therefore,
$(\E_z\cup Z_z, \E_z^*, Z_z, \infty)$,
is identified with:
$(\E, \E^*, Z, \infty)\times\{z\}$ if
$|z|\leqq 1$,
$(\E^+\cup F_z, \E^{+*}, Z^+\cup F_z, \infty)\times\{z\}$ if $z>1$, and
$(\E^-\cup F_z, \E^{-*}, Z^-\cup F_z, \infty)\times\{z\}$ if $z<-1$.

\begin{Lem}\label{EZ}
If $F$ is a finite subset of $\R^2-X$,
then there is a homeomorphism $h:\E\cup F\to\E$
such that $h$ is the identity on $\E^*$ and that $h(Z\cup F)=Z$.
\end{Lem}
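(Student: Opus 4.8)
The plan is to build $h$ as close to the identity as possible, moving only a single convergent sequence inside $Z$ so as to make room for the finitely many extra points of $F$ by a Hilbert-hotel shift.

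First I would record the relevant topology. Since $\E$ sits as a compact subset of $\R^2\cup\{\infty\}$ it is metrizable and totally disconnected, and since $F\subset\R^2-X$ is disjoint from $\E$, the space $\E\cup F$ is simply $\E$ with $m=|F|$ additional isolated points adjoined. As $Z$ is countably infinite and $\E$ is compact, its derived set $\E^*$ is nonempty; by hypothesis $Z\subset\E-\E^*$, so $\E^*\cap Z=\emptyset$ and thus $\E^*\subset\E-Z$. Note also that $(\E\cup F)-(Z\cup F)=\E-Z$, and that $Z\cup F$ is again a countable discrete set whose derived set in $\E\cup F$ is still $\E^*$. The whole task therefore reduces to producing a bijection $\phi\colon Z\cup F\to Z$ which, extended by the identity on $\E-Z$, is a homeomorphism: such an $h$ is automatically the identity on $\E^*\subset\E-Z$ and sends $Z\cup F$ onto $Z$.

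For the construction I would fix a point $p_0\in\E^*$ and, using metrizability, choose distinct points $z_1,z_2,\dots\in Z$ with $z_j\to p_0$. Writing $F=\{f_1,\dots,f_m\}$, I define $\phi(f_i)=z_i$ for $1\le i\le m$, $\phi(z_j)=z_{j+m}$ for all $j\ge1$, and $\phi=\mathrm{id}$ on every remaining point of $Z$. This $\phi$ is a bijection of $Z\cup F$ onto $Z$: the points $z_1,\dots,z_m$ are filled by the images of $F$, the chosen sequence is shifted bijectively onto its tail $\{z_{m+1},z_{m+2},\dots\}$, and all other points of $Z$ are fixed. I then set $h=\mathrm{id}$ on $\E-Z$ and $h=\phi$ on $Z\cup F$.

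It remains to verify that $h$ is a homeomorphism, and this is where the only real work lies. Since $\E\cup F$ is compact and $\E$ is metrizable, it suffices to show $h$ is a continuous bijection, and I argue sequentially. The set $S=F\cup\{z_j:j\ge1\}$ on which $h$ differs from the identity accumulates only at $p_0$, and likewise $h^{-1}$ differs from the identity only on $\{z_j:j\ge1\}$, which also accumulates only at $p_0$. Hence at any accumulation point $p\ne p_0$ both $h$ and $h^{-1}$ agree with the identity on a neighborhood of $p$ minus a finite set, so they are continuous there; and at $p_0$ the shifts $z_j\mapsto z_{j+m}$ and $z_j\mapsto z_{j-m}$ send sequences tending to $p_0$ to sequences tending to $p_0$, while the finitely many points of $F$ (respectively $z_1,\dots,z_m$) are bounded away from $p_0$ and so are met only finitely often along any such sequence. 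Therefore $h$ and $h^{-1}$ are continuous and $h$ is the desired homeomorphism. The main obstacle is thus purely the continuity bookkeeping at $\E^*$, which collapses to the single point $p_0$ once all the moving is confined to one sequence.
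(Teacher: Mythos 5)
Your proof is correct and takes essentially the same route as the paper's: both fix a point of $\E^*$, pick a sequence in $Z$ converging to it, absorb the finitely many points of $F$ by a Hilbert-hotel shift along that sequence, and extend by the identity elsewhere. The only difference is that you spell out the continuity check that the paper dismisses as ``easily follows.''
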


\begin{proof}
Let $F$ be $\{x_1,\cdots,x_r\}$.
Take a point $p$ in $\E^*$
and any sequence $\{p_i\}_{i=1}^\infty$ in $Z$
converging to $p$.
We define a bijection $h:\E\cup F\to\E$
by:
$h(x_k)=p_k$ for $k=1,\cdots,r$,
$h(p_i)=p_{r+i}$ for $i\geqq1$, and
$h$ is the identity otherwise.
Then, the continuity of $h$ easily follows.
\end{proof}
\vspace{2mm}

By this lemma and the self-similarity property, 
the $4$-tuple $(\E_z\cup Z_z, \E_z^*,
\break
Z_z, \infty)$ 
for the leaf $L_z$ is homeomorphic to
$(\E, \E^*, Z, \infty)$ for every $z\in\R$.
Hence, we can conclude that all the leaves $L_z$ of $\Omega$
is diffeomorphic to $\Sigma$. 

As the final step, we will transform the foliation $(\Omega, \G)$  
so that the resulting foliation satisfies the property (P).
To do so, take an arbitrary point $(x_0,y_0)$ from $\R^2-X$ 
and put $V=\{(x,y,z)\in \R^3\mid ((x-x_0)^2+(y-y_0)^2)|z|<1\}$.
Next, choose any diffeomorphism $\Psi:\R^3\to V$
which preserves leafwise the vertical foliation $dx=dy=0$.
Then, we can see that
the foliation $\Psi(\G)$ on $\Psi(\Omega)$
satisfies
the property $({\rm P})$ in \S {\rm 2},
while $\Psi(\Omega)$ is unbounded in the direction of $z$.
Therefore,
by Proposition {\rm 2.5}, 
we obtain a complete closed uni-leaf
foliation on $\B^3$ 
with leaves diffeomorphic to $\Sigma$.
This completes the proof of Theorem \ref{uni}.
\hfill$\square$
\vspace{3mm}

\noindent
{\bf Examples of surfaces with the self-similarity property.}
In the case of planar surfaces, $\E^*$ and $Z$ are empty,
hence, to check the self-similarity, we have only to show
that $\E_1\vee_{e_1=e_2}\E_2$ is homeomorphic to $\E$ for some $e\in\E$,
where $(\E_i,e_i)$, $i=1,2$, are copies of $(\E,e)$.
The following surfaces satisfy such a property:
$\R^2$, $\R^2$ minus a discrete closed infinite set, 
$\R^2$ minus a Cantor set, and $S^2$ minus a Cantor set.

In the case of nonplanar surfaces,
there are also many examples.
Here, we give one family of surfaces $\Sigma(r)$, $r\in\N$
$($Example $(2)$ in \S $1$ is $\Sigma(1))$.

\noindent
The endset pair $(\E, \E^*)$ of $\Sigma(r)$ is described as follows:

\begin{align*}
\E&=\{e, e_{i_1},e_{i_1i_2},\cdots,e_{i_1i_2\cdots i_r}\mid 
i_k\in\N, 1\leqq k\leqq r\},\\
\E^*&=\{e, e_{i_1},e_{i_1i_2},\cdots,e_{i_1i_2\cdots i_{r-1}}\mid 
i_k\in\N, 1\leqq k\leqq r-1\},\\
Z&=\{e_{i_1i_2\cdots i_r}\mid 
i_k\in\N, 1\leqq k\leqq r\}.
\end{align*}

Let $\E^{(\ell)}$ denote the $\ell$-th derived set of $\E$.
Then, for $1\leqq\ell\leqq r-1$, 
\begin{align*}
\E^{(\ell)}&=\{e, e_{i_1},e_{i_1i_2},\cdots,e_{i_1i_2\cdots i_{r-\ell}}\mid 
i_k\in\N, 1\leqq k\leqq r-\ell\},
\end{align*}
and $\E^{(r)}=\{e\}$.
For each $1\leqq k\leqq r$,
$e_{i_1i_2\cdots i_k}$ converges to $e_{i_1i_2\cdots i_{k-1}}$ as $i_k\to\infty$ while $i_1,i_2,\cdots,i_{k-1}$ being fixed,
and $e_{i_1}$ converges to $e$ as $i_1\to\infty$.

Now, put
\begin{align*} 
\E^+&=\{e, e_{i_1},e_{i_1i_2},\cdots,e_{i_1i_2\cdots i_r} \mid 
\textrm{$i_1$ is even and $i_2,\cdots,i_r$ are arbitrary}\}, \\
\E^-&=\{e, e_{i_1},e_{i_1i_2},\cdots,e_{i_1i_2\cdots i_r}\mid 
\textrm{$i_1$ is odd and $i_2,\cdots,i_r$ are arbitrary}\}, \\
\E^{+*}&=\E^+\cap\E^*,\ 
\E^{-*}=\E^-\cap\E^*,\ 
Z^+=\E^+\cap Z,\ 
Z^-=\E^-\cap Z,\ 
e^+=e^-=e, 
\end{align*}
and $h=id: \E^+\vee_{e^+=e^-} \E^-\to \E$.
Then, the $4$-tuple $(\E,\E^*,Z,e)$ of the surface 
$\Sigma(r)$ satisfies the self-similarity.
\vspace{4cm}

\begin{figure}[htbp]
\centering
\includegraphics[height=10cm]{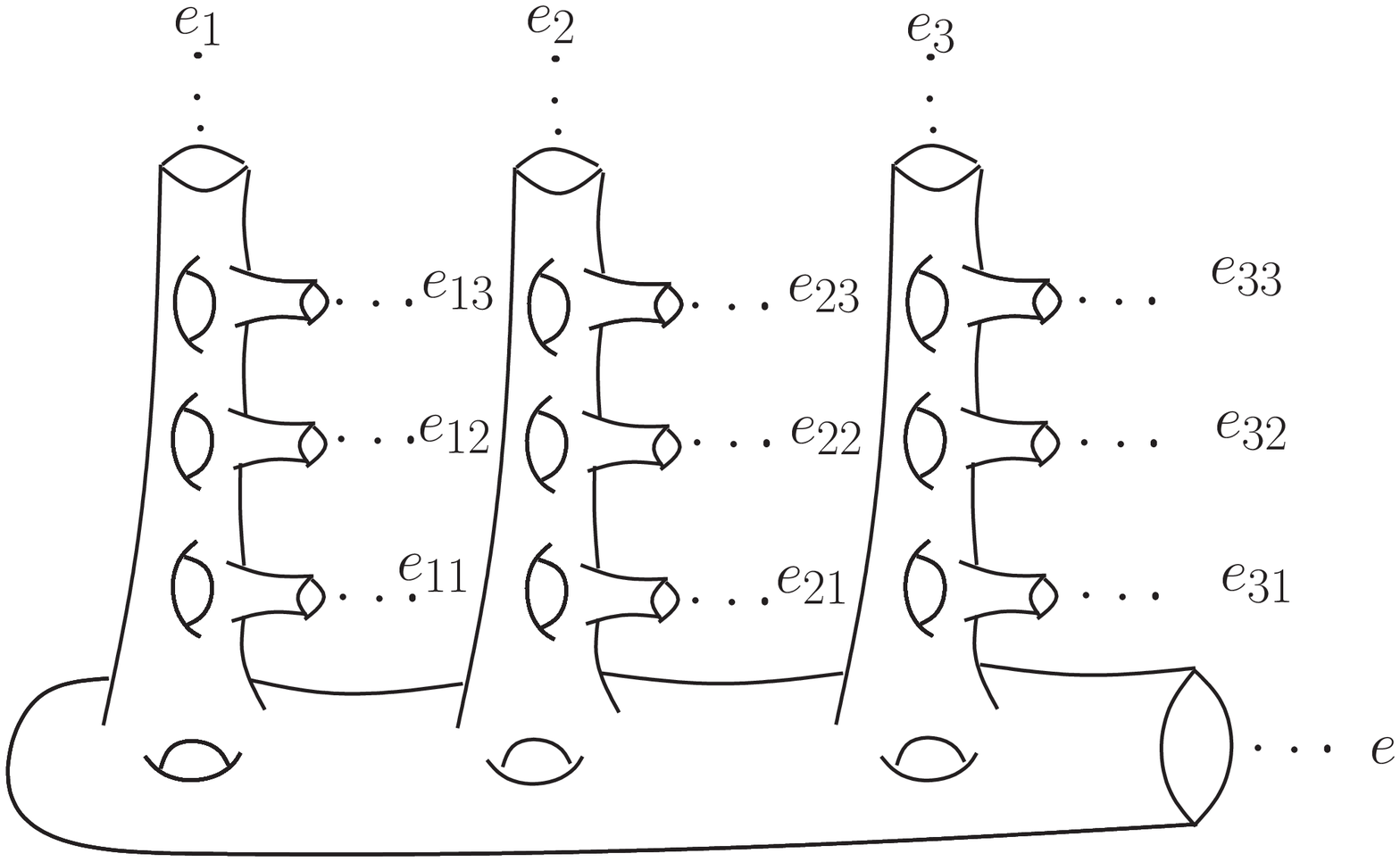}
      \caption{$\Sigma(2)$}
\end{figure}
\vspace{2mm}

\begin{xque}
{\rm
List up all the open orientable surfaces whose endsets satisfy 
the self-similarity property.
}
\end{xque} 

\begin{xque}
{\rm
Can a surface which does not satisfy the self-similarity property
be realized as a leaf of a uni-leaf foliation on $\B^3$?
}
\end{xque}

\section{Higher dimensional leaves}
In this section we consider the case of higher dimensional leaves.
We give two results.
The first one is the following

\begin{Thm}
Let $M$ be a simply connected open $n$-manifold, $n\geq3$,
with a smooth foliation $\F$ by leaves diffeomorphic to $\R^{n-1}$.
Then, $\F$ is smoothly conjugate to a complete closed 
$($and necessarily, uni-leaf$)$ foliation on $\B^n$.
\end{Thm}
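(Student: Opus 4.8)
The plan is to reduce the statement, via Palmeira's structure theorem, to a two–dimensional ``flattening'' problem and then to invoke Proposition 2.5. Since $M$ is simply connected, $\F$ is transversely orientable, and by Palmeira's theorem on open manifolds foliated by planes (Palmeira, 1978) the foliated manifold $(M,\F)$ is $C^\infty$-diffeomorphic to the product $(\R^2,\F_0)\times\R^{n-2}$, where $\F_0$ is a foliation of $\R^2$ by lines and the leaves of the product are $\gamma\times\R^{n-2}$ for $\gamma$ a leaf of $\F_0$. I henceforth identify $(M,\F)$ with this product foliation on $\R^n$, fixing coordinates $(w,u,v)\in\R^{n-2}\times\R^2=\R^n$ so that $\F_0$ lives in the $(u,v)$-plane and $\operatorname{pr}_n=v$. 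Note that each leaf of $\F_0$, being a line in $\R^2$, is closed, so all leaves of $\F$ are closed; thus the foliation is already closed and (trivially) uni-leaf, and the only remaining task is to realize it as a foliation of an open $\Omega\cong\B^n$ satisfying conditions (1)--(2) of Proposition 2.5.

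The key step is the two–dimensional construction. By Kaplan's theorem a line foliation of $\R^2$ is the family of connected components of the level sets of a smooth submersion $F\colon\R^2\to\R$ (transverse orientability together with the triviality of the holonomy of the line leaves furnishes a global first integral; after post-composing $F$ with a diffeomorphism of its image onto $\R$ we may assume $F$ is surjective). I would then build a diffeomorphism $\phi=(G,F)\colon\R^2\to\Omega_2$ onto an open set $\Omega_2\subset\R^2$, where $G$ is a leafwise coordinate chosen so that $G$ is strictly monotone along each leaf of $\F_0$ (making $\phi$ an immersion, since $dG\wedge dF\ne 0$) and leaves lying in the same level set $F=c$ are sent to disjoint horizontal segments (making $\phi$ injective). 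An injective immersion $\R^2\to\R^2$ is automatically a diffeomorphism onto its open image, so $\Omega_2\cong\R^2$, and by construction every leaf $\phi(\gamma)$ lies in a single horizontal line $\{v=c\}$ while $\operatorname{pr}_2(\Omega_2)=F(\R^2)=\R$.

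Transporting this to dimension $n$ is then immediate. Set $\Phi=\operatorname{id}_{\R^{n-2}}\times\phi$ and $\Omega=\R^{n-2}\times\Omega_2$, and let $\G=\Phi(\F)$. Because the $\R^{n-2}$-factor occupies the first coordinates, which do not affect $\operatorname{pr}_n=v$, each leaf $\R^{n-2}\times\phi(\gamma)$ has $\operatorname{pr}_n$ equal to the constant $c$, hence bounded, while $\operatorname{pr}_n(\Omega)=\operatorname{pr}_2(\Omega_2)=\R$ is unbounded; moreover $\Omega=\R^{n-2}\times\Omega_2\cong\R^n\cong\B^n$ and the leaves of $\G$ are closed. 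Thus $\G$ verifies hypotheses (1) and (2) of Proposition 2.5, which yields a diffeomorphism of $\Omega$ onto $\B^n$ carrying $\G$ to a complete closed foliation; composing the diffeomorphisms shows that $\F$ is smoothly conjugate to it, and it is uni-leaf since every leaf is $\R^{n-1}$.

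The main obstacle is the construction of the leafwise coordinate $G$ with the disjoint-window property, carried out smoothly and globally. This is exactly the point where the (generally non-Hausdorff) structure of the leaf space $Q=\R^2/\F_0$ intervenes: $Q$ is a simply connected $1$-manifold equipped with the induced local homeomorphism $\bar F\colon Q\to\R$, and producing $G$ amounts to embedding this tree-like leaf space into the plane with $\bar F$ as one of the coordinates and then thickening the leaves into segments. The simple connectedness of $Q$, inherited from that of $M$, is what makes such a planar, height-respecting embedding possible; verifying that it can be realized smoothly, with pairwise disjoint windows over each value of $F$ and with open image, is the technical heart of the argument.
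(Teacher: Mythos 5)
Your overall strategy is the same as the paper's: reduce via Palmeira's theorem to the horizontal-hyperplane foliation restricted to an open subset $\Omega\subset\R^n$ with $\Omega\cong\R^n$, arrange that $\operatorname{pr}_n(\Omega)$ is unbounded while $\operatorname{pr}_n$ is constant on each leaf, and apply Proposition 2.5. The difference is where you draw the line between what you cite and what you prove. The paper invokes Palmeira's result in its strong form, which already delivers a diffeomorphism $h\colon M\to\Omega\subset\R^n$ carrying $\F$ to the restriction of the foliation by horizontal hyperplanes; after that the proof is two lines. You instead take from Palmeira only the product decomposition $(M,\F)\cong(\R^2,\F_0)\times\R^{n-2}$ and then set out to build the planar flattening $\phi=(G,F)\colon\R^2\to\Omega_2$ yourself.

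That flattening is precisely where your write-up has a genuine gap. The existence of a global submersive first integral $F$ (Kaplan, Haefliger--Reeb) is fine, but the construction of the leafwise coordinate $G$ making $(G,F)$ an injective immersion with open image --- equivalently, a height-respecting planar realization of the (non-Hausdorff, simply connected) leaf space $Q=\R^2/\F_0$ with the leaves thickened to pairwise disjoint horizontal intervals --- is a nontrivial theorem, not a routine verification; it is essentially the content of the two-dimensional lemma inside Palmeira's paper. You explicitly flag it as ``the technical heart'' and do not carry it out, so as written the argument is incomplete at its central step. The gap is entirely fillable, either by actually proving the planar realization (say, by induction over an exhaustion of $Q$ by finite subtrees, taking care that the resulting map is smooth and its image open) or, more economically, by citing Palmeira in the stronger form the paper uses. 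Everything downstream of that point --- surjectivity of $F$ giving condition (1) of Proposition 2.5, constancy of $\operatorname{pr}_n$ on leaves giving condition (2), closedness of the leaves, and the final application of Proposition 2.5 --- is correct and matches the paper.
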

This result is kindly taught by the referee to the authors.

\begin{proof}
Let $M$ and $\F$ be as in the hypothesis of the theorem.
Then, 
it follows from the deep result of 
Palmeira \cite{P}
that there are an open set $\Omega$ of $\R^n$
diffeomorphic to $\R^n$
and a diffeomorphism $h:M\to\Omega$ such that $h(\F)$ coincides
with the restriction to $\Omega$ of the foliation of $\R^n$
by horizontal hyperplanes.
Here, we may assume that $\Omega$ satisfies the property (1) of 
Proposition 2.5.
In fact, if $\operatorname{pr}_n(\Omega)$ is bounded,
it suffices to replace $\Omega$ with 
$(id_{\R^{n-1}}\times\varphi)(\Omega)$,
where $\varphi$ is an arbitrary diffeomorphism
from the open interval $(\inf\operatorname{pr}_n(\Omega),\sup\operatorname{pr}_n(\Omega))$ to $\R$.
Clearly, $h(\F)$ satisfies the property (2) of 
Proposition 2.5.
Thus, by Proposition 2.5, 
there exists a diffeomorphism $\Phi$ from $\Omega$ to $\B^n$
such that $\Phi(h(\F))$ is a complete closed foliation on $\B^n$.
\end{proof}

Let $\overline{\B}^n$ denote the closed unit $n$-ball, and
$\operatorname{pr}_i$ the projection from a product space
to its $i$-th factor. The next result in this section is the following

\begin{Thm}\label{higher}
Let $n\geqq 3$. Suppose that $F$ is a connected compact $(n-1)$-dimensional smooth 
submanifold of $\overline{\B}^{n-1}\times\R$
such that 
$F\cap(\partial\overline{\B}^{n-1}\times\R)
=\partial\overline{\B}^{n-1}\times\{0\}=\partial F$
and that $F$ is transverse to 
$\partial\overline{\B}^{n-1}\times\R$ at $\partial F$.
Let $E$ be a closed subset of $F$ satisfying that
\begin{itemize}
\item[(1)] $F-E$ is connected,
\item[(2)] $E$ contains $\partial F$, and that
\item[(3)]there exists a neighborhood $U$ of $E$ in $F$ such that
$\operatorname{pr}_1:\overline{\B}^{n-1}\times\R\to\overline{\B}^{n-1}$
 maps $U$ diffeomorphically to $\operatorname{pr}_1(U)$
and that $\operatorname{pr}_1^{-1}\operatorname{pr}_1(U)\cap F=U$.
\end{itemize}
Then, there is a codimension $1$ complete closed smooth 
foliation of $\B^n$ with a leaf diffeomorphic to $F-E$.
\end{Thm}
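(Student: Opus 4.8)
The plan is to reduce Theorem~\ref{higher} to Proposition~2.5 by constructing a suitable open domain $\Omega\subset\R^n$ together with a codimension~$1$ closed foliation $\G$ whose leaves all have bounded $n$-th coordinate, and with a distinguished leaf diffeomorphic to $F-E$. The geometric heart of the matter is that $F$ sits inside the solid cylinder $\ov{\B}^{n-1}\times\R$ meeting the cylinder wall exactly along $\partial\ov{\B}^{n-1}\times\{0\}$, transversally. First I would thicken $F$ to a foliation: using the transversality at $\partial F$ and condition~(3), which says $\operatorname{pr}_1$ is a diffeomorphism on a neighborhood $U$ of $E$ that is ``saturated'' (i.e. $\operatorname{pr}_1^{-1}\operatorname{pr}_1(U)\cap F=U$), one can extend $F$ to a smooth product foliation of a neighborhood of $F$ in the \emph{open} cylinder $\B^{n-1}\times\R$. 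Concretely, I expect to build a Morse function $f$ on a domain $O\subset\R^n$ with $f^{-1}(0)=F-\partial F$ (or a slight push-off of it into the open cylinder), modeled on the construction in the proof of Theorem~\ref{main}: near $E$ the graphlike behavior of $F$ guaranteed by~(3) lets $f$ be taken as the vertical coordinate, and away from $E$ one absorbs the topology of $F-E$ into the regular level sets.

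The key structural step is to realize $F-E$ as a \emph{properly embedded} leaf in an open $3$- (resp.\ $n$-) manifold diffeomorphic to $\R^n$, exactly as in \S3. Since $E$ is closed in $F$, contains $\partial F$, and $F-E$ is connected by~(1), the complement $F-E$ is an open manifold whose ends correspond to the way it approaches $E$; I would push the ``boundary data'' $E$ off to infinity. The mechanism is Proposition~3.2 (and, where genus or discrete surgery data appear, Proposition~\ref{push}): after deleting from $\B^{n-1}\times\R$ a family of vertical half-lines emanating from the locus where $F$ meets $E$ — in particular the half-lines over $\partial\ov{\B}^{n-1}$ corresponding to the cylinder wall — the resulting domain $\Omega$ remains diffeomorphic to $\R^n$, while the leaf through $F$ becomes a proper copy of $F-E$. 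The saturation condition~(3) is precisely what guarantees that these deletion half-lines can be chosen transverse to all nearby level sets and pairwise disjoint, so that the ambient push-to-infinity of Propositions~3.1 and~3.2 applies without destroying the foliation.

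Once $\Omega\cong\R^n$ and the nonsingular foliation $\G$ (the level sets of $f$ restricted to $\Omega$, with all critical points deleted along with the half-lines) are in hand, every leaf is closed in $\Omega$, and the distinguished leaf is $F-E$. It then remains only to verify the two hypotheses of Proposition~2.5. Property~(2), that $\operatorname{pr}_n(L)$ is bounded for each leaf $L$, follows because each leaf is a compact piece $f^{-1}(z)$ of the bounded cylinder $\ov{\B}^{n-1}\times\R$ minus discrete points, hence lies in a fixed horizontal slab; to secure the unboundedness required by property~(1) I would, exactly as in Example~4.2, compose with a diffeomorphism $\Psi:\R^n\to V$, $V=\{(x',z)\mid |x'|^2\,|z|<1\}$, preserving the vertical lines, after which $\Psi(\Omega)$ is unbounded in the $z$-direction while $\Psi(\G)$ satisfies property~(P). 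Proposition~2.5 then delivers a diffeomorphism onto $\B^n$ carrying $\G$ to a complete closed foliation with a leaf diffeomorphic to $F-E$.

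The main obstacle I anticipate is the \emph{simultaneous} control, near the locus $E$, of three things: extending $F$ to a genuine product foliation of a neighborhood, choosing the deletion half-lines transverse to every nearby level set, and ensuring the deleted points produce exactly the intended ends of $F-E$ (so that the leaf type is $F-E$ and not some other surface with extra or missing punctures). Condition~(3) is the linchpin here — it is what converts the possibly wild approach of $F$ to $E$ into graphlike, hence controllable, behavior — and the delicate bookkeeping is to show that the half-lines over $\operatorname{pr}_1(U)$ can be pushed to infinity without creating additional noncompact ends beyond those of $F-E$. Everything else is a higher-dimensional transcription of the \S2--\S3 machinery.
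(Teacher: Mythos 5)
Your proposal is correct and follows essentially the same route as the paper: take a Morse function on $(\overline{\B}^{n-1}-\operatorname{pr}_1(E))\times\R$ with $f^{-1}(0)=F-E$ (condition (3) guaranteeing that deleting the vertical lines over $\operatorname{pr}_1(E)$ excises exactly $E$ from $F$), push the critical points to infinity along divergent paths via Proposition 3.1, identify the resulting domain union $\B^{n-1}\times(2,\infty)$ with $\B^n$ via Proposition 3.2, and conclude with Proposition 2.5. The only deviation is your final $\Psi:\R^n\to V$ squeezing step borrowed from \S 4, which is harmless but unnecessary here: since $f=\operatorname{pr}_2$ outside the slab $|z|\leqq1$, every level set already has bounded $n$-th coordinate, and attaching $\B^{n-1}\times(2,\infty)$ already makes $\operatorname{pr}_n(\Omega)$ unbounded.
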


\begin{proof}
The proof is essentially the same as the one in the surface case.
Let $n$, $F$ and $E$ be as above.
We take a Morse function $f:(\overline{\B}^{n-1}-\operatorname{pr}_1(E))\times\R\to\R$ 
so that 
\begin{itemize}
\item[(1)] $f=\operatorname{pr}_2$ on $(\overline\B^{n-1}-\operatorname{pr}_1(E))\times[(-\infty,-1]\cup[1,\infty)]$, and that
\item[(2)] $0$ is a regular value of $f$ with $f^{-1}(0)=F-E$.
\end{itemize}
Next, we take an exhausting sequence $\{K_i\}$
 of codimension $0$ compact submanifolds in 
$\overline{\B}^{n-1}-\operatorname{pr}_1(E)$,
and a family of injective smooth paths
$c_p:[0,\infty)\to(\overline\B^{n-1}-\operatorname{pr}_1(E))\times\R$, 
$p\in\operatorname{Crit} (f)$,
satisfying the same six conditions with the ones in \S 3.
Then, 
$M=(\overline\B^{n-1}-\operatorname{pr}_1(E))\times\R-\bigcup_{p\in\operatorname{Crit}(f)}c_p([0,\infty))$ is diffeomorphic to $(\overline\B^{n-1}-\operatorname{pr}_1(E))\times\R$,
and $\Omega=M\cup(\B^{n-1}\times(2,\infty))$ is diffeomorphic to $\B^n$.
Finally, by exactly the same argument given in \S 3
we complete the proof of Theorem \ref{higher}.
\end{proof}


\begin{xrem}
{\rm
For example, we may take as $E-\partial F$ 
the Whitehead continuum, the Menger sponge, and so on.
}
\end{xrem} 

\section{Higher codimensions}

\begin{Prop} \label{codim}
Let $q$ and $q'$ be positive integers such that $1\leqq q<q'$.
Given a connected $p$-dimensional manifold $L$,
if there is a codimension $q$ complete closed smooth 
foliation on $\B^{p+q}$ with a leaf diffeomorphic to $L$,
then,
there is a codimension $q'$ complete closed smooth 
foliation on $\B^{p+q'}$ with a leaf diffeomorphic to $L$.
\end{Prop}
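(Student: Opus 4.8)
The plan is to \emph{stabilize} the given foliation by taking a product with a Euclidean factor of dimension $m=q'-q\geqq1$ and then to invoke Proposition 2.5 to recover completeness. Write $n=p+q$ and $n'=p+q'=n+m$. By hypothesis there is a codimension $q$ complete closed smooth foliation $\F$ on $\B^{n}\subset\R^{n}$ admitting a leaf $L_0$ diffeomorphic to $L$; in particular every leaf of $\F$ is a closed subset of $\B^{n}$. I regard $\R^{n'}=\R^{n}\times\R^{m}$ and set $\Omega=\B^{n}\times\R^{m}$, an open subset of $\R^{n'}$. Since $\B^{n}$ is diffeomorphic to $\R^{n}$, the set $\Omega$ is diffeomorphic to $\R^{n}\times\R^{m}=\R^{n'}$, hence to $\B^{n'}$; this is the only geometric identification the argument requires.

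Next I would define the product foliation $\G$ on $\Omega$ whose leaves are the sets $L_\lambda\times\{y\}$, where $L_\lambda$ runs over the leaves of $\F$ and $y\in\R^{m}$. Multiplying the foliation charts of $\F$ (of the form $\R^{p}\times\R^{q}$) by $\R^{m}$ shows that $\G$ is a genuine smooth foliation of $\Omega$ whose leaves are $p$-dimensional, so its codimension is $n'-p=q'$. Each leaf $L_\lambda\times\{y\}$ is closed in $\Omega$ because $L_\lambda$ is closed in $\B^{n}$ and $\{y\}$ is closed in $\R^{m}$; thus $\G$ is a closed foliation. Ordering the coordinates so that the $n'$-th is the last coordinate of the $\R^{m}$ factor, we have $\operatorname{pr}_{n'}(\Omega)=\R$, which is unbounded, while $\operatorname{pr}_{n'}(L_\lambda\times\{y\})$ is the single value $y_m$, hence bounded. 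Therefore $\G$ satisfies hypotheses (1) and (2) of Proposition 2.5, and that proposition yields a diffeomorphism $\Phi:\Omega\to\B^{n'}$ such that $\Phi(\G)$ is a complete closed foliation on $\B^{n'}$. Its leaves are diffeomorphic to those of $\G$, so $\Phi(L_0\times\{0\})$ is a leaf diffeomorphic to $L$, which completes the proof.

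I do not expect a serious obstacle here: the argument is a soft stabilization, and the only points requiring care are essentially bookkeeping, namely that the product of $\F$ with the point-foliation of $\R^{m}$ really has codimension $q'$ and that property $(\mathrm{P})$ holds for every leaf. It is worth noting that only the \emph{closedness} of the leaves of the hypothesis foliation is used in building $\G$; the completeness of the resulting foliation on $\B^{n'}$ is produced afresh by Proposition 2.5, through the turbulization along the labyrinth of \S2. Alternatively one could raise the codimension one unit at a time by taking $m=1$ and iterating, but treating the general $m$ directly is no harder.
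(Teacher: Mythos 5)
Your proof is correct, but it takes a genuinely different route from the paper's. The paper multiplies $\F$ by the \emph{bounded} factor $\B^{q'-q}$, observes that the product leaves $F\times\{z\}$ are automatically complete (since $F$ is complete and $\{z\}$ is a point) and closed, and then transfers the foliation to $\B^{p+q'}$ via a \emph{quasi-isometric} diffeomorphism $\B^{p+q}\times\B^{q'-q}\to\B^{p+q'}$; the quasi-isometry is the essential point there, since an arbitrary diffeomorphism would not preserve completeness of leaves. You instead multiply by the unbounded factor $\R^{m}$, discard the metric information entirely, and re-manufacture completeness from scratch by feeding the product foliation on $\Omega=\B^{n}\times\R^{m}$ into Proposition 2.5; your verification of hypotheses (1) and (2) (that $\operatorname{pr}_{n'}(\Omega)=\R$ while each leaf $L_\lambda\times\{y\}$ projects to a point) is correct, and Proposition 2.5 indeed places no restriction on the codimension of the foliation it turbulizes. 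The trade-off: the paper's argument is a two-line observation once one grants the quasi-isometry, and it visibly uses the completeness hypothesis; yours is longer and leans on the labyrinth machinery of \S 2, but in exchange it shows that only the \emph{closedness} of the given codimension~$q$ foliation is actually needed for the conclusion, which is a mild strengthening of the proposition as stated.
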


\begin{proof}
Suppose $\F$ is a codimension $q$ complete closed smooth 
foliation on $\B^{p+q}$ with a leaf diffeomorphic to $L$.
Then the foliation on $\B^{p+q}\times \B^{q'-q}$ defined by
$F\times\{z\}$ ($F\in\F$, $z\in \B^{q'-q}$) as leaves
is a codimension $q'$ complete closed smooth foliation
and has a leaf diffeomorphic to $L$.
Since $\B^{p+q}\times \B^{q'-q}$ is diffeomorphic to
$\B^{p+q'}$ by a quasi-isometric diffeomorphism, 
the conclusion follows.
\end{proof}

Combining Proposition \ref{codim} with Theorem \ref{main} and
Theorem \ref{higher}, we obtain

\begin{Thm}
Let $L$ be $\Sigma$ in Theorem {\rm \ref{main}} or
$F-E$ in Theorem {\rm \ref{higher}}, and let $p=\dim L$.
Then, for any positive integer $q$,
there is a codimension $q$ complete closed smooth 
foliations on the open unit ball $\B^{p+q}$
having $L$ as a leaf.
\end{Thm}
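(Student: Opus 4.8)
The plan is to treat this theorem as an immediate consequence of the three results already established, namely Theorem~\ref{main}, Theorem~\ref{higher}, and Proposition~\ref{codim}; the entire content reduces to a base case in codimension~$1$ followed by a single application of the dimension-raising proposition. First I would dispose of the base case $q=1$ by splitting according to the two possibilities for $L$. If $L=\Sigma$ is a connected open orientable surface, then $p=\dim L=2$, and Theorem~\ref{main} directly furnishes a codimension~$1$ complete closed smooth foliation of $\B^3=\B^{p+1}$ having $\Sigma=L$ as a leaf. If instead $L=F-E$ as in Theorem~\ref{higher}, then $\dim L=n-1$, so $p=n-1$ and $\B^n=\B^{p+1}$, and Theorem~\ref{higher} supplies a codimension~$1$ complete closed smooth foliation of $\B^{p+1}$ with $F-E=L$ as a leaf. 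In either case the asserted foliation for $q=1$ is exactly one of these two earlier theorems.

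For the step to arbitrary $q\geq 2$, I would invoke Proposition~\ref{codim} with initial codimension~$1$ and target codimension~$q$. Since $1\leq 1<q$ whenever $q\geq 2$, the hypotheses on the codimensions are met, and the base case just constructed provides the required codimension~$1$ complete closed smooth foliation on $\B^{p+1}$ with a leaf diffeomorphic to $L$. Proposition~\ref{codim} then produces a codimension~$q$ complete closed smooth foliation on $\B^{p+q}$ still having $L$ as a leaf, which is precisely the conclusion sought.

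The only point requiring genuine verification is that the connectedness hypothesis of Proposition~\ref{codim} holds for $L$. This is immediate: $\Sigma$ is connected by the standing assumption of Theorem~\ref{main}, and $F-E$ is connected by condition~$(1)$ in the hypotheses of Theorem~\ref{higher}. Beyond this, the argument is pure bookkeeping: one checks that the dimensions line up as $p+q$ in both cases, while completeness and closedness are preserved verbatim by Proposition~\ref{codim}. Accordingly I do not expect any substantive obstacle at this stage; all of the analytic and topological difficulty was already absorbed into the constructions underlying Theorem~\ref{main}, Theorem~\ref{higher}, and the product construction of Proposition~\ref{codim}.
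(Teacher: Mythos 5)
Your proposal is correct and matches the paper exactly: the paper simply states that the theorem follows by combining Proposition~\ref{codim} with Theorems~\ref{main} and~\ref{higher}, which is precisely your base case $q=1$ followed by the codimension-raising step for $q\geq 2$. The dimension bookkeeping and the connectedness check you include are the only details to verify, and you handle them correctly.
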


Similarly, by Proposition \ref{codim} and Theorem \ref{uni}
we have

\begin{Thm}
Let $L$ be $\Sigma$ in Theorem {\rm \ref{uni}}.
Then, for any positive integer $q$,
there is a codimension $q$ complete closed smooth 
uni-leaf foliation on the open unit ball $\B^{3+q}$
having $L$ as a leaf.
\end{Thm}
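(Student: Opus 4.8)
The plan is to deduce the statement directly from Theorem~\ref{uni} and Proposition~\ref{codim}, exactly as the two immediately preceding theorems were deduced from their base cases. Theorem~\ref{uni} furnishes the base case: under its hypotheses on $\Sigma$ there is a codimension~$1$ complete closed smooth uni-leaf foliation of $\B^3$ all of whose leaves are diffeomorphic to $\Sigma$. This is precisely an instance of the hypothesis of Proposition~\ref{codim}, applied to the single surface $L=\Sigma$ in the lowest admissible codimension, so the machinery of that proposition is available without modification.

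First I would invoke Proposition~\ref{codim} to raise the codimension and thereby pass from $\B^3$ to the larger ball $\B^{3+q}$. Recall that the construction underlying that proposition forms the product of the given foliation with the trivial (point-by-point) foliation of an auxiliary ball: the leaves of the enlarged foliation are exactly the slices $F\times\{z\}$, where $F$ ranges over the leaves of the base foliation and $z$ over the auxiliary factor, and the quasi-isometric identification of the resulting product ball with the round ball $\B^{3+q}$ carries the complete closed foliation on the product to a complete closed foliation on $\B^{3+q}$. In this way one obtains, for every positive integer $q$, a codimension~$q$ complete closed smooth foliation on $\B^{3+q}$ still possessing a leaf diffeomorphic to $\Sigma$.

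The only point genuinely new here, beyond the use of Proposition~\ref{codim} already made in the previous two theorems, is that the uni-leaf property must be shown to survive the codimension-raising; and this I expect to be immediate rather than an obstacle. Indeed, since every leaf $F$ of the base foliation supplied by Theorem~\ref{uni} is diffeomorphic to $\Sigma$, every slice $F\times\{z\}$ of the enlarged foliation is again diffeomorphic to $\Sigma$, so all leaves of the foliation on $\B^{3+q}$ are mutually diffeomorphic and the foliation is again uni-leaf, with each leaf diffeomorphic to $\Sigma$. Thus the entire argument is carried by Theorem~\ref{uni} together with the product trick of Proposition~\ref{codim}, and the sole additional verification --- preservation of the uni-leaf condition --- is routine. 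Consequently I anticipate no serious difficulty: whatever care is required lies only in the dimension bookkeeping and in confirming that the quasi-isometry in Proposition~\ref{codim} leaves completeness intact, both of which are already handled within the cited results.
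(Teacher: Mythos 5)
Your proposal is correct and follows exactly the paper's route: Theorem~\ref{uni} supplies the codimension~$1$ complete closed uni-leaf foliation on $\B^3$, Proposition~\ref{codim} (the product-with-a-ball trick plus the quasi-isometric identification $\B^{3}\times\B^{q-1}\cong\B^{3+q-1+1}$) raises the codimension, and the uni-leaf property is preserved since every leaf of the product foliation is of the form $F\times\{z\}$ with $F$ diffeomorphic to $\Sigma$. The paper leaves this last verification implicit, so your explicit check of it is a harmless (indeed welcome) addition, not a deviation.
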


\subsection*{Acknowledgements}
The authors would like to thank the referee for many invaluable
suggestions, which have made the manuscript much more readable.
They also thank Ryoji Kasagawa and Atsushi Sato 
for their interest in our work and many helpful comments.

\normalsize

\end{document}